\documentclass[reqno]{article}
\pdfoutput=1



\usepackage{lipsum}
\usepackage{amsfonts}
\usepackage{graphicx}
\usepackage{epstopdf}
\usepackage{algorithmic}
\usepackage{fancyhdr}
\usepackage{url}
\ifpdf
  \DeclareGraphicsExtensions{.eps,.pdf,.png,.jpg}
\else
  \DeclareGraphicsExtensions{.eps}
\fi


\usepackage{graphicx}
\usepackage{amsmath}
\usepackage{bm}

\font\Bbb=msbm10 scaled 1000
\font\sBbb=msbm7 scaled 1000
\font\gothic=eufm10 scaled 1100
\font\smallgothic=eufm7 scaled 1100

\def\bb#1{\mbox{\sBbb #1}}             
\def\BB#1{\mbox{\Bbb #1}}              
\def\CC#1{\mathrm{#1}}                 
\def\Gg#1{\mbox{\smallgothic #1}}      
\def\GG#1{\mbox{\gothic #1}}           
\def\Mm#1{\mbox{\boldmath$\scriptstyle #1$\unboldmath}}
\def\MM#1{\mbox{\boldmath$#1$\unboldmath}}

\def\R#1{(\ref{#1})}

\def\D{\,\CC{d}}

\newenvironment{case}
{\left\{ \arraycolsep=12pt
  \hspace*{-9pt}
  \begin{array}{ll}}
  {\end{array}
    \right.
    \hspace*{-9pt}}


\newcommand{\ud}{\mbox{d}}



\fancyhead[CE]{Pseudospectra and Master Magnus} 
\fancyhead[CO]{Iserles and MacNamara} 



\usepackage{amsopn}

\newtheorem{theorem}{Theorem}

\newcommand{\QED}{\hspace*{2em}\hfill$\Box$}
\newenvironment{proof}[1]{\vspace{-2pt}{\em Proof\/}\quad #1}{\QED
  \vspace{8pt}}

\newcommand{\ee}{{\mathrm e}}

\newcommand{\hyper}[5]
       {{}_{#1} F_{#2} \!\left[
           \begin{array}{l}
         #3;\\#4;
           \end{array}#5\right]
       }

\newcommand{\AAA}{\mathcal{A}}
\newcommand{\DI}{\circle*{4}}
\newcommand{\UP}{\line(0,1){12}}
\newcommand{\LE}{\line(-1,1){12}}
\newcommand{\RI}{\line(1,1){12}}





\title{Magnus expansions and  pseudospectra of ~\\ Master Equations}

\author{Arieh Iserles\thanks{Department of Applied Mathematics and Mathematical Physics,
University of Cambridge, Wilberforce Road, Cambridge CB3 0WA, UK ({\tt a.iserles@damtp.cam.ac.uk}).}
        \and Shev MacNamara\thanks{The School of Mathematics and Statistics, University of New South Wales, UNSW, Sydney, Australia {\tt (s.macnamara@unsw.edu.au)}}}

\begin{document}

\maketitle

\begin{abstract}
 New directions in research on master equations are showcased by example.
 Magnus expansions, time-varying rates, and pseudospectra are highlighted.
 Exact eigenvalues are found  and contrasted with the large errors produced by standard numerical methods in some cases.
Isomerisation  provides a running example and an illustrative application to chemical kinetics.
  We also give a brief example of the totally asymmetric exclusion process.
\end{abstract}

\textbf{Keywords}: 
 graph Laplacian, Kirchhoff, Matrix-Tree Theorem, pseudospectra,  isomerisation, master equation, Gillespie Stochastic Simulation Algorithm, Magnus expansion, Lie algebra

\textbf{AMS}:   60J28, 60H35, 65F40, 65F99, 65L15 , 65FL99, 92C40

\section{Introduction}

The term `master equation' goes back at least as far as the work of Kac  in the middle of the twentieth century \cite[page 105]{Kac1957bookMasterEquation}, and the subject of master equations admits a Feynman--Kac stochastic path integral formulation \cite{MarkusWeber2016reviewFeynmanKacMasterEqn}. 
The general principle of a governing equation emerging from ensemble averages goes back much further in the history of statistical mechanics, including the kinetic theories of Boltzmann and, earlier, of Bernoulli in the 1700s.
Generalised master equations can cater to some form of memory and therefore be non-Markovian but the most common interpretation of master equations is as Markov processes.
Perhaps the first application of the eponymous Markov process was Andrei Markov's model of a poem, ``Eugeny Onegin,'' as a Markov chain, which he presented in 1913 in St Petersburg.
Other famous applications include Shannon's Information Theory and Google's PageRank to find order in the information on the World Wide Web  \cite{Top5MarkovApplications}.
Choosing the simplest examples, we describe applications to exclusion processes and chemical processes, although the computational methods we present have wider applicability.


\subsection{Models of isomerisation}
The same chemical species can sometimes exist in two distinct molecular forms, $S_1$ and $S_2$, and can reversibly convert from one form, or isomer, to the other in a process named \textit{isomerisation}:
$
S_1 \longleftrightarrow S_2.
$
A mathematical model involves two rate constants (this terminology is common, but in our examples the rate `constants' are often \textit{time-dependent}), $c_1(t)$ associated with the forward reaction
$
S_1 \stackrel{c_1}{\longrightarrow} S_2 ,
$
and $c_2(t)$ for the backward reaction
$
S_1 \stackrel{c_2}{\longleftarrow} S_2.
$

A hierarchy of three mathematical frameworks for modelling chemical reactions is provided by the reaction rate equations (RRE), the chemical Langevin equation, and the chemical master equation (CME).
Typically when all species are present in high concentrations, the deterministic reaction rate equations are a good model at a macroscopic scale, but if some species are present in small numbers of molecules then often the discrete and stochastic CME is a more appropriate model at a mesoscopic scale \cite{MacBur08, PetzoldCloud, PavliotisStuart2008book}. 
Stochastic differential equations such as the Langevin equation for isomerisation  \cite{Gillespieisomerization2002} and their corresponding Fokker--Planck partial differential equations provide models at scales that are intermediate between those of the deterministic rate equations and the discrete and stochastic master equations.



The reaction rate equations for this model of isomerisation are the two ordinary differential equations (ODEs)
\begin{equation}
\frac{\ud}{\ud t}[S_1] = -c_1(t) [S_1] + c_2(t) [S_2], \qquad
\frac{\ud}{\ud t}[S_2] = +c_1(t) [S_1] - c_2(t) [S_2],
\label{eq:0}
\end{equation}
where $[S_i]$ indicates the concentration (molecules per unit volume) of species $i$.

The master equation for this model is  a continuous time,  discrete state Markov process for which  a linear system of ODEs, $p' = A p$, describes the evolution of the associated probability distribution $p$.
The $i$th state records the integer number of molecules of each species, and the probability of this state is recorded in the $i$th entry of the vector $p$.
In a small time $\ud t$, the probability mass that flows  from state $j$ to a different state $i$ is approximately given by $A_{ij} \ud t$.
The matrix $A$ has nonnegative off-diagonals and zero column sum, and is thus a \textit{graph Laplacian}.
As an example, if we start with $N$ molecules of species $S_1$ and zero molecules of  $S_2$, then there are $N+1$ states, $(i,N-i)$ for $i=0,\ldots,N$, where state $i$ has $i$ molecules of $S_1$.
If our initial condition has all probability concentrated on state $(0,N)$, then our initial probability vector is $p(0) = (0, 0, \ldots, 1)^\top$.
With rates $c_1(t) = 1+ f(t)$ and $c_2(t) = 1-f(t)$,  the probability vector evolves according to the linear ODE \eqref{eq:1.1}, introduced below, which is the CME for isomerisation.

``Generally, the CME has such extremely high dimension that it cannot be handled analytically or computationally'' \cite{HighamD2008}.
In this article we focus on some exceptions.
A large class of important and solvable models, including isomerisation, arise when reaction rates are linear as a function of the state \cite{Jahnke2007}.
For this special class of models we have exact agreement between the average value of the stochastic CME model and the solution of the corresponding deterministic reaction rate equations.
(Usually these models agree only approximately.)
The exact solution to the CME \eqref{eq:1.1} for our isomerisation example is a binomial distribution, where the time-varying parameter in the binomial distribution comes from the solution to the corresponding RRE \eqref{eq:0}.
This makes it an ideal candidate for demonstrating novel applications of Magnus methods, which as we will see, reveal finer structure in the master equations.

\subsection{A master equation for isomerisation with explicitly time-varying rates}
We are concerned with the linear ODE
\begin{equation}
  \label{eq:1.1}
\frac{\ud}{\ud t}  \MM{p}= \left[ A^{[0]}+A^{[1]} f(t) \right]\! \MM{p},\qquad \qquad \MM{p}(0)=\MM{p}_0\in\BB{R}^{N+1},
\end{equation}
involving two matrices $A^{[0]}$ and $A^{[1]}$ defined by, for $k,\ell=0,\ldots,N$,
\begin{equation}
\label{eq:A0:A1:definitions}
  A^{[0]}_{k,\ell}=
  \begin{case}
    -N, & k=\ell,\\
    \ell, & k=\ell-1,\\
    N-\ell, & k=\ell+1,\\
    0, & \mbox{otherwise;}
  \end{case}\qquad A^{[1]}_{k,\ell}=
  \begin{case}
    N-2\ell, & k=\ell,\\
    \ell, & k=\ell-1,\\
    -N+\ell, & k=\ell+1,\\
    0, & \mbox{otherwise.}
  \end{case}
\end{equation}
The $A^{[0]}$ matrix is remarkably close to the \texttt{`clement'} matrix in the \texttt{MATLAB} gallery, which has a zero main diagonal but is otherwise the same.

If $-1 \le f(t) \le 1$ then $\AAA = A^{[0]}+A^{[1]} f(t) $ has the usual properties of a graph Laplacian matrix (sometimes called the \textit{infinitesimal generator} of the Markov process).
In that case (\ref{eq:1.1}) is a master equation, which was originally simulated  for the special case $f(t)=\sin t$ \cite{KormannMacNamara2016}. 
Here, we generalize.
It turns out  \R{eq:1.1} has a truly miraculous structure.

\section{The Magnus expansion}
\label{sec:magnus:expansion}
The matrix exponential is essentially the solution of a linear ODE when the coefficient matrix is constant, i.e.
\begin{equation}
 \frac{\ud}{\ud t}  \bm{p} = \mathbb{A} \bm{p}  \qquad \textrm{\;\; with solution \, } \quad \bm{p}(t) = \exp(t \mathbb{A} ) \bm{p}(0).
\label{eq:constant:matrix:ODE}
\end{equation}
When the matrix varies in time, $\mathbb{A}=\mathbb{A}(t)$, the solution is no longer simply the matrix exponential, but it can still be expressed in an exponential form.
We write
\begin{equation}
 \frac{\ud}{\ud t}  \bm{p} = \mathbb{A}(t) \bm{p}  \qquad \textrm{\;\; with solution \, } \quad \bm{p}(t) = \exp( \MM{\Omega}(t) ) \bm{p}(0).
\label{eq:ODE:magnus:form}
\end{equation}
Here, the Magnus expansion \cite{Magnus54} tells us how to find the crucial matrix $\MM{\Omega}(t)$ as an infinite series, namely
\begin{equation}
	\MM{\Omega} (t) = \int_{0}^{t} \mathbb{A}(s) \ud s - \frac{1}{2} \int_{0}^{t} \left[\int_{0}^{s}\mathbb{A}(r) \ud r, \mathbb{A}(s)\right] \ud s + \ldots.
\label{eq:ODE:magnus:first:two:terms}
\end{equation}
All higher order terms in the expansion can be generated recursively by integration and commutation, thus involving commutators as a factor.
The \textit{commutator}  of two matrices is, as usual, $[A,B] \equiv AB -BA$.
In the special case that the matrix commutes with itself for all time, i.e. $[\mathbb{A}(t_1), \mathbb{A}(t_2)] \equiv 0$, those commutators are all zero so the expansion simplifies to $\Omega (t) = \int_0^{t} \mathbb{A}(s) \ud s $, agreeing with our intuition from the scalar case.
This expansion, which is valid for all sufficiently small times $t$, was originally motivated by applications in quantum mechanics where it was derived by an analogy with Cauchy--Picard iteration in the 1950s.
For a long time it remained merely a theoretical tool, and it was only nearing the turn of the century that it was fashioned into an effective computational tool  \cite{iserles00lgm}.

A remarkable correspondence between terms in the Magnus expansion and rooted, binary trees (elucidated in \cite[equation (4.10)]{iserles00lgm}) allows \eqref{eq:ODE:magnus:first:two:terms} to be written as
\begin{equation}
  \label{eq:2.1}
  \Omega(t)=\sum_{m=0}^\infty \sum_{\tau\in\bb{T}_m} \int_0^t \alpha(\tau) G_\tau(x)\D x .
\end{equation}
All terms in the expansion are identified with a rooted, binary tree in the set of Magnus trees, denoted $\cup_m \BB{T}_m$.
In this correspondence vertical lines correspond to integration and joining trees corresponds to commutation.
Here is the four-step recipe.
\begin{enumerate}
\item $\BB{T}_m$ is the set of Magnus trees with $m$ vertical lines.
\item The only member of $\BB{T}_0$ is \begin{picture}(10,10) \put (5,5) \DI \end{picture}.
\item $\tau\rightarrow G_\tau$ is a mapping from Magnus trees to matrices. Specifically,  $G_\bullet=\AAA$ and, given $m\geq1$, any $\tau\in\BB{T}_m$ can be represented in the form
\begin{equation}
  \label{eq:2.2}
  \tau=
  \begin{picture}(24,26)
    \put (12,0) \DI
    \put (12,0) \LE
    \put (12,0) \RI
    \put (0,12) \DI
    \put (0,12) \UP
    \put (-4,28) {$\tau_1$}
    \put (20,16) {$\tau_2$}
  \end{picture},\qquad \tau_1\in\BB{T}_{m_1},\; \tau_2\in\BB{T}_{m_2},\quad m_1+m_2=m-1.
\end{equation}
In that case
\begin{displaymath}
  G_\tau(t)=\left[\int_0^t G_{\tau_1}(x)\D x,G_{\tau_2}(t)\right]\!.
\end{displaymath}
\item $\alpha:\tau\rightarrow\BB{Q}$ is a mapping from Magnus trees to rational numbers.
Specifically, $\alpha(\bullet)=1$ and, for any $\tau\in\BB{T}_m$ for $m\geq1$, with $\CC{B}_s$ denoting Bernoulli numbers,
\begin{displaymath}
  \tau=
  \begin{picture}(60,64)
    \put (12,0) \DI
    \put (12,0) \LE
    \put (12,0) \RI
    \put (0,12) \DI
    \put (0,12) \UP
    \put (-4,28) {$\eta_1$}
    \put (24,12) \DI
    \put (24,12) \LE
    \put (24,12) \RI
    \put (12,24) \DI
    \put (12,24) \UP
    \put (8,40) {$\eta_2$}
    \put (48,36) \DI
    \put (48,36) \LE
    \put (48,36) \RI
    \put (36,48) \DI
    \put (60,48) \DI
    \put (36,48) \UP
    \put (32,64) {$\eta_s$}
    \multiput (38,26)(2,2){4}{\circle*{1}}
  \end{picture}\qquad \Rightarrow\qquad \alpha(\tau)=\frac{\CC{B}_s}{s!} \prod_{j=1}^s \alpha(\eta_j).
\end{displaymath}
\end{enumerate}

In general, this procedure elegantly expresses the Magnus expansion \eqref{eq:2.1}  as
\begin{eqnarray*}
  \Omega(t)&\leadsto&
  \begin{picture}(10,12)
    \put (5,0) \DI
    \put (5,0) \UP
    \put (5,12) \DI
  \end{picture} -\frac12
  \begin{picture}(24,36)
    \put (12,0) \DI
    \put (12,0) \UP
    \put (12,12) \DI
    \put (12,12) \LE
    \put (12,12) \RI
    \put (0,24) \DI
    \put (24,24) \DI
    \put (0,24) \UP
    \put (0,36) \DI
  \end{picture}+\frac{1}{12}
  \begin{picture}(36,48)
    \put (12,0) \DI
    \put (12,0) \UP
    \put (12,12) \DI
    \put (12,12) \LE
    \put (12,12) \RI
    \put (0,24) \DI
    \put (24,24) \DI
    \put (0,24) \UP
    \put (0,36) \DI
    \put (24,24) \LE
    \put (24,24) \RI
    \put (12,36) \DI
    \put (36,36) \DI
    \put (12,36) \UP
    \put (12,48) \DI
  \end{picture}+\frac14
  \begin{picture}(36,60)
    \put (24,0) \DI
    \put (24,0) \UP
    \put (24,12) \DI
    \put (24,12) \LE
    \put (24,12) \RI
    \put (12,24) \DI
    \put (36,24) \DI
    \put (12,24) \UP
    \put (12,36) \DI
    \put (12,36) \LE
    \put (12,36) \RI
    \put (0,48) \DI
    \put (24,48) \DI
    \put (0,48) \UP
    \put (0,60) \DI
  \end{picture}-\frac18
  \begin{picture}(48,84)
    \put (36,0) \DI
    \put (36,0) \UP
    \put (36,12) \DI
    \put (36,12) \LE
    \put (36,12) \RI
    \put (24,24) \DI
    \put (48,24) \DI
    \put (24,24) \UP
    \put (24,36) \DI
    \put (24,36) \LE
    \put (24,36) \RI
    \put (12,48) \DI
    \put (36,48) \DI
    \put (12,48) \UP
    \put (12,60) \DI
    \put (12,60) \LE
    \put (12,60) \RI
    \put (0,72) \DI
    \put (24,72) \DI
    \put (0,72) \UP
    \put (0,84) \DI
  \end{picture}-\frac{1}{24}
  \begin{picture}(36,72)
    \put (12,0) \DI
    \put (12,0) \UP
    \put (12,12) \DI
    \put (12,12) \LE
    \put (12,12) \RI
    \put (0,24) \DI
    \put (24,24) \DI
    \put (0,24) \UP
    \put (0,36) \DI
    \put (24,24) \LE
    \put (24,24) \RI
    \put (12,36) \DI
    \put (36,36) \DI
    \put (12,36) \UP
    \put (12,48) \DI
    \put (12,48) \LE
    \put (12,48) \RI
    \put (0,60) \DI
    \put (24,60) \DI
    \put (0,60) \UP
    \put (0,72) \DI
  \end{picture}\\
  &&\mbox{}-\frac{1}{24}
  \begin{picture}(60,60)
    \put (30,0) \DI
    \put (30,0) \UP
    \put (30,12) \DI
    \put (30,12) {\line(-3,2){18}}
    \put (30,12) {\line(3,2){18}}
    \put (12,24) \DI
    \put (48,24) \DI
    \put (12,24) \UP
    \put (12,36) \DI
    \put (12,36) \LE
    \put (12,36) \RI
    \put (0,48) \DI
    \put (24,48) \DI
    \put (0,48) \UP
    \put (0,60) \DI
    \put (48,24) \LE
    \put (48,24) \RI
    \put (36,36) \DI
    \put (60,36) \DI
    \put (36,36) \UP
    \put (36,48) \DI
  \end{picture}-\frac{1}{24}
  \begin{picture}(36,80)
    \put (24,0) \DI
    \put (24,0) \UP
    \put (24,12) \DI
    \put (24,12) \LE
    \put (24,12) \RI
    \put (12,24) \DI
    \put (36,24) \DI
    \put (12,24) \UP
    \put (12,36) \DI
    \put (12,36) \LE
    \put (12,36) \RI
    \put (0,48) \DI
    \put (24,48) \DI
    \put (0,48) \UP
    \put (0,60) \DI
    \put (24,48) \LE
    \put (24,48) \RI
    \put (12,60) \DI
    \put (36,60) \DI
    \put (12,60) \UP
    \put (12,72) \DI
  \end{picture}+\cdots.
\end{eqnarray*}

\subsection{A special property of isomerisation matrices}
Recognising the following special property (confirmed by an easy matrix multiplication)
\begin{equation}
  \label{eq:1.3}
  [A^{[0]},A^{[1]}]=-2A^{[1]}
\end{equation}
usefully simplifies our Magnus expansion.
This simple form of the commutator \R{eq:1.3} is fundamental because the Magnus expansion is constructed as a linear combination of terms that can be obtained from $\AAA(t)=A^{[0]}+A^{[1]}f(t)$ using only integration and commutation.
 It thus resides in the {\em free Lie algebra\/} $\mathcal{F}$ generated by $A^{[0]}$ and $A^{[1]}$.
In light of \R{eq:1.3}, that $\mathcal{F}$ is
\begin{equation}
  \label{eq:1.4}
  \mathcal{F} (A^{[0]},A^{[1]})=\CC{Span}\,\{A^{[0]},A^{[1]}\}.
\end{equation}
In other words, although in general the Magnus expansion of the solution may require many terms, \textit{the Magnus expansion of \R{eq:1.1} for isomerisation is simply a linear combination of the form\footnote{Indeed, more is true.  A Lie algebra $\Gg{g}$ is \textit{solvable} if there exists $M \geq 0 $ such that $\Gg{g}^{[M]}=\{0\}$, where $\Gg{g}^{[0]}=\Gg{g}$ and $\Gg{g}^{[k+1]}=[\Gg{g}^{[k]},\Gg{g}^{[k]}]$. By \R{eq:1.3}, $\dim  \mathcal{F}^{[1]}=1$ so it is a commutative algebra and $ \mathcal{F}^{[2]}=\{0\}$. The algebra is solvable!}  \, $ \Omega(t) = \sigma_{[0]}(t) A^{[0]}+\sigma_{[1]}(t)A^{[1]}$!}

\subsection{A Magnus expansion of isomerisation}

We now specialize the general form of the expansion  \eqref{eq:2.1} to our application of isomerisation  \R{eq:1.1}, for which
\begin{displaymath}
  \bullet\leadsto A^{[0]}+f(t)A^{[1]}.
\end{displaymath}
By following the four step algorithm near \eqref{eq:2.2}, we find the first few terms in the series \eqref{eq:ODE:magnus:first:two:terms} and the corresponding trees are
\begin{eqnarray*}
  \begin{picture}(10,12)
    \put (5,0) \DI
    \put (5,0) \UP
    \put (5,12) \DI
  \end{picture}:&\quad& \int_0^t \AAA(x)\D x=tA^{[0]}+\int_0^t f(x)\D x A^{[1]},\\
  \begin{picture}(24,36)
    \put (12,0) \DI
    \put (12,0) \UP
    \put (12,12) \DI
    \put (12,12) \LE
    \put (12,12) \RI
    \put (0,24) \DI
    \put (24,24) \DI
    \put (0,24) \UP
    \put (0,36) \DI
  \end{picture}:&& \int_0^t \int_0^{x_1} [\AAA(x_2),\AAA(x_1)]\D x_2\D x_1 \\
  &&=\int_0^t \left[x_1f(x_1)-\int_0^{x_1} f(x_2)\D x_2\right]\D x_1 [A^{[0]},A^{[1]}]\\
  &&=2\int_0^t (t-2x)f(x)\D x A^{[1]}
\end{eqnarray*}
and so on.
Note we made use of  \R{eq:1.3} for the commutator to simplify the expressions.
Moreover, a matrix commutes with itself so some  terms are zero, such as
\begin{displaymath}
  \begin{picture}(60,60)
    \put (30,0) \DI
    \put (30,0) \UP
    \put (30,12) \DI
    \put (30,12) {\line(-3,2){18}}
    \put (30,12) {\line(3,2){18}}
    \put (12,24) \DI
    \put (48,24) \DI
    \put (12,24) \UP
    \put (12,36) \DI
    \put (12,36) \LE
    \put (12,36) \RI
    \put (0,48) \DI
    \put (24,48) \DI
    \put (0,48) \UP
    \put (0,60) \DI
    \put (48,24) \LE
    \put (48,24) \RI
    \put (36,36) \DI
    \put (60,36) \DI
    \put (36,36) \UP
    \put (36,48) \DI
  \end{picture}:\qquad \left[ 2\int_0^t (t-2x)f(x)\D x A^{[1]},-2\int_0^t [f(t)-f(x)]\D x A^{[1]}\right]=O.
\end{displaymath}

We claim that for $\tau\in\BB{T}_m$, $m\geq1$, necessarily $G_\tau$ is a scalar multiple of $A^{[1]}$, i.e.
$
  G_\tau(t)=\sigma_\tau(t) A^{[1]}.
$

We already know from  \R{eq:1.3}  and \eqref{eq:1.4} that our Magnus expansion is of the form $\sigma_{[0]}(t) A^{[0]}+\sigma_{[1]}(t)A^{[1]}$.
In view of the first few trees above, our claim immediately implies $\sigma_{[0]} (t)=t$.
Having now found $\sigma_{[0]}$, it remains only to find $\sigma_{[1]}$, so to simplify notation, we drop the subscript from now on and let $\sigma=\sigma_{[1]}$.

The proof of the claim is by induction. 
For $m=1$ there is only one Magnus tree,
\begin{displaymath}
  \tau=
  \begin{picture}(24,24)
    \put (12,0) \DI
    \put (12,0) \LE
    \put (12,0) \RI
    \put (0,12) \DI
    \put (24,12) \DI
    \put (0,12) \UP
    \put (0,24) \DI
  \end{picture}\qquad\Rightarrow\qquad G_\tau(t)=-2\int_0^t [f(t)-f(x)]\D x A^{[1]}.
\end{displaymath}
Therefore $\sigma_\tau(t)=-2\int_0^t [f(t)-f(x)]\D x$.

Consider next $m\geq2$ and \R{eq:2.2}. If $m_1,m_2\geq1$ then, by the induction assumption, both $G_{\tau_1}$ and $G_{\tau_2}$ are scalar multiples of $A^{[1]}$ and we deduce that $G_\tau\equiv O$. 
There are two remaining possibilities: either $m_1=0$, $m_2=m-1$ or $m_1=m-1$, $m_2=0$. 
In the first case
\begin{equation}
  \label{eq:2.3}
  \tau=
  \begin{picture}(24,24)
    \put (12,0) \DI
    \put (12,0) \LE
    \put (12,0) \RI
    \put (0,12) \DI
    \put (0,12) \UP
    \put (0,24) \DI
    \put (20,16) {$\tau_2$}
  \end{picture},
\end{equation}
so
$
  G_\tau(t)=\left[tA^{[0]}+\int_0^t f(x)\D x A^{[1]},\sigma_{\tau_2}(t)A^{[1]}\right]=t\sigma_{\tau_2}(t) [A^{[0]},A^{[1]}]  
$
which is simply $G_\tau(t)=-2t\sigma_{\tau_2}(t) A^{[1]}$, so $\sigma_\tau(t)=-2t\sigma_{\tau_2}(t)$.

Finally, for $m_1=m-1$ and $m_2=0$, we have
\begin{equation}
  \label{eq:2.4}
  \tau=
  \begin{picture}(24,36)
    \put (12,0) \DI
    \put (12,0) \LE
    \put (12,0) \RI
    \put (0,12) \DI
    \put (24,12) \DI
    \put (0,12) \UP
    \put (-4,28) {$\tau_1$}
  \end{picture}
\end{equation}
for which
$
  G_\tau(t)=\left[\int_0^t \sigma_{\tau_1}(x)\D xA^{[1]},A^{[0]}+f(t)A^{[1]}\right]=-\int_0^t \sigma_{\tau_1}(x)\D x[A^{[0]},A^{[1]}].
$
This is simply  $G_\tau(t)=2\int_0^t \sigma_{\tau_1}(x)\D x A^{[1]}$ so
 $\sigma_\tau(t)=2\int_0^t \sigma_{\tau_1}(x)\D x $. 
This completes the proof of 

\begin{theorem}
\label{theorem:magnus:expansion:isomerisation:first:pass}
  The Magnus expansion for isomerisation \R{eq:1.1} is of the form 
  \begin{equation}
  \Omega(t)=tA^{[0]}+\sigma(t) A^{[1]}
  \label{eq:Magnus:expansion:form:first:pass}
  \end{equation}
   for a function $\sigma$ which has been described above in a recursive manner.
\end{theorem}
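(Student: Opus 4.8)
The plan is to use the algebraic identity \R{eq:1.3} to force the shape of every term of the Magnus series, and then to read off the two scalar coefficients. Since $\Omega(t)$ is built from $\AAA(t)=A^{[0]}+f(t)A^{[1]}$ using only integration in the time variable and commutation, for each fixed $t$ it lies in the free Lie algebra $\mathcal{F}(A^{[0]},A^{[1]})$; by \R{eq:1.3}--\R{eq:1.4} this algebra equals $\CC{Span}\{A^{[0]},A^{[1]}\}$, so automatically $\Omega(t)=\sigma_{[0]}(t)A^{[0]}+\sigma_{[1]}(t)A^{[1]}$ for some scalar functions. What remains is to pin down $\sigma_{[0]}$ and to describe $\sigma_{[1]}$ recursively.

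The key step is to track where an $A^{[0]}$ component can enter. In the tree parametrisation \eqref{eq:2.1}--\eqref{eq:2.2}, the unique tree in $\BB{T}_0$ contributes $\int_0^t\AAA(x)\D x=tA^{[0]}+\bigl(\int_0^t f(x)\D x\bigr)A^{[1]}$, and I claim this is the only source of an $A^{[0]}$ term: for every $\tau\in\BB{T}_m$ with $m\ge1$ one has $G_\tau(t)=\sigma_\tau(t)A^{[1]}$. I would prove this by induction on $m$. The base case $m=1$ is the single tree, with $G_\tau(t)=-2\int_0^t[f(t)-f(x)]\D x\,A^{[1]}$. For $m\ge2$, write $\tau$ via \R{eq:2.2} as the join of $\tau_1\in\BB{T}_{m_1}$ and $\tau_2\in\BB{T}_{m_2}$, $m_1+m_2=m-1$, and split cases. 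If $m_1,m_2\ge1$ then both $G_{\tau_1},G_{\tau_2}$ are multiples of $A^{[1]}$ by the inductive hypothesis and, since $[A^{[1]},A^{[1]}]=O$, the whole term vanishes. If $m_1=0$ (tree \R{eq:2.3}) then $\int_0^t G_{\tau_1}(x)\D x=tA^{[0]}+\bigl(\int_0^t f(x)\D x\bigr)A^{[1]}$ and its commutator with $\sigma_{\tau_2}(t)A^{[1]}$ is $t\sigma_{\tau_2}(t)[A^{[0]},A^{[1]}]=-2t\sigma_{\tau_2}(t)A^{[1]}$; if $m_2=0$ (tree \R{eq:2.4}) then similarly $G_\tau(t)=2\bigl(\int_0^t\sigma_{\tau_1}(x)\D x\bigr)A^{[1]}$. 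In each case $G_\tau$ is a multiple of $A^{[1]}$, which closes the induction and at the same time yields the recursion $\sigma_\tau(t)=-2t\sigma_{\tau_2}(t)$ (when $m_1=0$) and $\sigma_\tau(t)=2\int_0^t\sigma_{\tau_1}(x)\D x$ (when $m_2=0$), with base value $\sigma_\tau(t)=-2\int_0^t[f(t)-f(x)]\D x$.

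Finally I would assemble the pieces. Summing \eqref{eq:2.1} over all trees, the only $A^{[0]}$ contribution is the $tA^{[0]}$ coming from $\BB{T}_0$ --- and this part is exact, independent of any convergence question --- so $\sigma_{[0]}(t)=t$. The $A^{[1]}$ coefficient is then $\sigma(t):=\sigma_{[1]}(t)=\int_0^t f(x)\D x+\sum_{m\ge1}\sum_{\tau\in\BB{T}_m}\alpha(\tau)\int_0^t\sigma_\tau(x)\D x$ with the $\sigma_\tau$ defined by the recursion above, which is precisely the recursive description claimed, understood (as always for Magnus expansions) for $t$ small enough that the series converges. I expect no serious mathematical obstacle here; the only care needed is structural bookkeeping in the induction --- noticing that the only trees that can contribute are the ``caterpillar'' trees, in which one of the two branches at every internal node is the trivial tree $\bullet$ --- together with the standard caveat about convergence, which the solvability of $\mathcal{F}$ noted earlier suggests can later be lifted by summing the recursion in closed form.
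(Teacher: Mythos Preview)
Your proposal is correct and follows essentially the same route as the paper: an induction on $m$ showing that every $G_\tau$ with $\tau\in\BB{T}_m$, $m\ge1$, is a scalar multiple of $A^{[1]}$, with the same three-way case split on $(m_1,m_2)$ and the same recursions $\sigma_\tau(t)=-2t\,\sigma_{\tau_2}(t)$ and $\sigma_\tau(t)=2\int_0^t\sigma_{\tau_1}(x)\D x$. The only cosmetic difference is that you front-load the free Lie algebra observation \R{eq:1.4} before the induction, whereas the paper states it alongside; the substance is identical.
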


Next, we will explicitly find the function $\sigma$ of \eqref{eq:Magnus:expansion:form:first:pass}  in the Theorem, and thus find the Magnus expansion of isomerisation.
We do not present all steps in the derivations to come. 
Theorem \eqref{theorem:magnus:expansion:isomerisation:first:pass} and the steps leading to it were deliberately chosen for presentation partly because this quickly gives a good sense of the style of arguments needed in this area, while still being very accessible. 
The steps required in our other proofs follow a similar pattern, albeit more detailed.

\subsection{Constructing the trees}
In general, when we want to find the Magnus trees, we can follow the four-step algorithm near \eqref{eq:2.2}.
That always works.
Often though, particular applications allow simplifications, as we now use our application to illustrate.
The main question to be answered for this example is how to connect the coefficients  $\alpha(\tau)$ to the trees in the situations of \R{eq:2.3} and of \R{eq:2.4}.

The situation for \R{eq:2.4} is trivial: since $s=1$, we have
\begin{displaymath}
  \alpha(\tau)=\frac{\CC{B}_1}{1!}\alpha(\tau_1)=-\frac12 \alpha(\tau_1).
\end{displaymath}

It is more complicated in the situation of \R{eq:2.3}. 
There we have
\begin{displaymath}
  \tau_2=
  \begin{picture}(60,72)
    \put (12,0) \DI
    \put (12,0) \LE
    \put (12,0) \RI
    \put (0,12) \DI
    \put (0,12) \UP
    \put (-4,28) {$\eta_1$}
    \put (24,12) \DI
    \put (24,12) \LE
    \put (24,12) \RI
    \put (12,24) \DI
    \put (12,24) \UP
    \put (8,40) {$\eta_2$}
    \put (48,36) \DI
    \put (48,36) \LE
    \put (48,36) \RI
    \put (36,48) \DI
    \put (60,48) \DI
    \put (36,48) \UP
    \put (32,64) {$\eta_s$}
    \multiput (38,26)(2,2){4}{\circle*{1}}
  \end{picture}\qquad\Rightarrow\qquad
  \tau=
  \begin{picture}(72,76)
    \put (12,0) \DI
    \put (12,0) \LE
    \put (12,0) \RI
    \put (0,12) \DI
    \put (0,12) \UP
    \put (0,24) \DI
    \put (24,12) \DI
    \put (24,12) \LE
    \put (24,12) \RI
    \put (12,24) \DI
    \put (12,24) \UP
    \put (8,40) {$\eta_1$}
    \put (36,24) \DI
    \put (36,24) \LE
    \put (36,24) \RI
    \put (24,36) \DI
    \put (24,36) \UP
    \put (20,52) {$\eta_2$}
    \put (60,48) \DI
    \put (60,48) \LE
    \put (60,48) \RI
    \put (48,60) \DI
    \put (72,60) \DI
    \put (48,60) \UP
    \put (44,76) {$\eta_s$}
    \multiput (50,38)(2,2){4}{\circle*{1}}
  \end{picture}
\end{displaymath}
Therefore
\begin{displaymath}
  \alpha(\tau_2)=\frac{\CC{B}_s}{s!} \prod_{j=1}^s \alpha(\eta_j),\qquad \alpha(\tau)=\frac{\CC{B}_{s+1}}{(s+1)!} \prod_{j=1}^s \alpha(\eta_j).
\end{displaymath}
Hence, to summarize  
\begin{eqnarray*}
  s=1:& & \quad \alpha(\tau_2)=-\frac12 \alpha(\eta_1),\quad \alpha(\tau)=\frac{1}{12} \alpha(\eta_1)=-\frac16 \alpha(\tau_2);\\
  s\mbox{\ even}:& & \quad \CC{B}_{s+1}=0\quad\Rightarrow\quad \alpha(\tau)=0;\\
  s\geq3\mbox{\ odd}: & &  \quad \CC{B}_s=0\quad\Rightarrow\quad \alpha(\tau_2)=0.
\end{eqnarray*}

This is a moment to comment on the mechanisms giving rise to some of our simplifications. 
Not all Magnus trees feature --- with nonzero coefficients --- in the expansion \R{eq:2.1}.
There are two mechanisms that explain this:
(i) The coefficient $\alpha(\tau)$ is zero; or
(ii) $\sigma_\tau\equiv0$, because a matrix commutes with itself and $\tau$ originates in trees $\tau_1$ and $\tau_2$ such that   $G_{\tau_k}(t)=\sigma_{\tau_k}(t)A^{[1]}$, for $k=1,2$.
There is an important difference between these two situations. 
For the first mechanism, while we do not include the tree $\tau$ in \R{eq:2.1}, we must retain it for further recursions. 
In the second mechanism, though, if a tree is zero then all its `children' are zero too.

The long-and-short is that in every $\BB{T}_m$, $m\geq1$ we have $2^{m-1}$ trees (some with a zero coefficient).
What we really have is a binary `super-tree'
\vspace{-0.2cm}
\begin{displaymath}
  \begin{picture}(300,200)
    \thicklines
    \put (150,190) {$\tau_\star$}
    \put (148,187) {\vector(-2,-1){50}}
    \put (158,187) {\vector(2,-1){50}}
    \put (93,153) {$\tau_0$}
    \put (205,153) {$\tau_1$}
    \put (92,150) {\vector(-2,-3){20}}
    \put (102,150) {\vector(2,-3){20}}
    \put (203,150) {\vector(-2,-3){20}}
    \put (213,150) {\vector(2,-3){20}}
    \put (67,112) {$\tau_{00}$}
    \put (118,112) {$\tau_{10}$}
    \put (176,112) {$\tau_{01}$}
    \put (227,112) {$\tau_{11}$}
    \put (65,108) {\vector(-1,-2){12}}
    \put (75,108) {\vector(1,-2){12}}
    \put (116,108) {\vector(-1,-2){12}}
    \put (126,108) {\vector(1,-2){12}}
    \put (174,108) {\vector(-1,-2){12}}
    \put (184,108) {\vector(1,-2){12}}
    \put (225,108) {\vector(-1,-2){12}}
    \put (235,108) {\vector(1,-2){12}}
    \put (44,76) {$\tau_{000}$}
    \put (77,76) {$\tau_{100}$}
    \put (97,76) {$\tau_{110}$}
    \put (130,76) {$\tau_{010}$}
    \put (153,76) {$\tau_{001}$}
    \put (186,76) {$\tau_{101}$}
    \put (206,76) {$\tau_{011}$}
    \put (239,76) {$\tau_{111}$}
    \put (115,60) {\ldots{}and so on.}
  \end{picture}
\end{displaymath}

  \vspace{-2.2cm} \noindent The rule is: Each move `left' (i.e.\ in the 0 direction -- the subscripts are binary strings) corresponds to `scenario' \R{eq:2.3}; 
Each move `right' corresponds to `scenario' \R{eq:2.4}.
Now that we have simplified our system for dealing with the trees,  we are ready to proceed to find $\sigma$.

\vspace{-0.1cm}
\subsection{An explicit formula for $\sigma$}
As we have seen, except for $\BB{T}_0$, every $\tau\in\BB{T}_m$ leads to an expression of the form $\sigma_\tau(t) A^{[1]}$. 
For example, setting $\tilde{f}(x)=xf'(x)$,
\begin{eqnarray*}
  \BB{T}_1 : && \;\;\;\;\;\; \tau_\star=
  \begin{picture}(24,24)
    \put (12,0) \DI
    \put (12,0) \LE
    \put (12,0) \RI
    \put (0,12) \DI
    \put (24,12) \DI
    \put (0,12) \UP
    \put (0,24) \DI
  \end{picture}\quad\Rightarrow\quad  \sigma_{\tau_\star}=-2\int_0^t \tilde{f}(x)\D x,\quad \alpha(\tau_\star)=-\frac12.  
 \end{eqnarray*}
By continuing to find these trees, we see a pattern emerge:
For any $\tau\in\BB{T}_m$, $m\geq1$, our $\sigma_\tau(t)$ is of the form
$
  \sigma_\tau(t)=\int_0^t K_\tau(t,x)\tilde{f}(x)\D x
$
for some {\em kernel\/} $K_\tau$.
To find the kernels, it is convenient for $\tau\in\BB{T}_m$, $m\geq2$, to work with
\begin{equation}
  \label{eq:2.6}
  \tau=
  \begin{picture}(60,72)
    \put (12,0) \DI
    \put (12,0) \LE
    \put (12,0) \RI
    \put (0,12) \DI
    \put (24,12) \DI
    \put (0,12) \UP
    \put (0,24) \DI
    \put (24,12) \LE
    \put (12,24) \DI
    \put (12,24) \UP
    \put (12,36) \DI
    \multiput (27,15)(2,2){4} {\circle*{1}}
    \put (36,24) \DI
    \put (36,24) \LE
    \put (36,24) {\line(1,1){24}}
    \put (24,36) \DI
    \put (48,36) \DI
    \put (24,36) \UP
    \put (24,48) \DI
    \put (48,36) \LE
    \put (36,48) \DI
    \put (60,48) \DI
    \put (36,48) \UP
    \put (34,64) {$\eta$}
    \put (-15,26) {\rotatebox{45}{$\overbrace{\hspace*{45pt}}^{r\CC{\ times}}$}}
  \end{picture}
\end{equation}
Let $r\in\{0,1,\ldots,m-2\}$ and $\eta\in\BB{T}_{m-r}$. 
Straightforward computation shows that
\begin{eqnarray*}
  \eta&\leadsto& K_\eta(t,x), \qquad 
  \begin{picture}(10,24)
    \put (5,0) \DI
    \put (5,0) \UP
    \put (3,16) {$\eta$}
  \end{picture} \leadsto  \int_x^t K_\eta(y,x)\D y,
\qquad
  \begin{picture}(24,30)
    \put (12,0) \DI
    \put (12,0) \LE
    \put (12,0) \RI
    \put (0,12) \DI
    \put (24,12) \DI
    \put (0,12) \UP
    \put (-3,28) {$\eta$}
  \end{picture} \leadsto 2\int_x^t K_\eta(y,x)\D y .
\end{eqnarray*}
%
This pattern motivates arguments by induction, for \R{eq:2.6}, that lead to
\begin{equation}
  \label{eq:2.7}
  K_\tau(t,x)=2(-2t)^r \int_x^t K_\eta(y,x)\D y,\qquad \alpha(\tau)=\frac{\CC{B}_{r+1}}{(r+1)!}\alpha(\eta).
\end{equation}
We left out one exceptional case, namely $\tau=\tau_{\Mm{0}}$. 
In that case the representation \R{eq:2.6} is still true but $\eta\in\BB{T}_0$, so is not associated with a kernel. 
However, easy computation confirms that
$
  K_{\tau_{\Mm{0}}}(t,x)=-2(-2t)^{m-1},\; \alpha(\tau_{\Mm{0}})=\frac{\CC{B}_m}{m!}.
$

Now that we have the kernels, we sum them. 
Let
\[
  \Theta_m(t,x)=\sum_{\tau\in\bb{T}_m} \alpha(\tau) K_\tau(t,x),
\]
for $m\in\BB{N}$.
For example, $  \Theta_1(t,x) \equiv 1$ and $  \Theta_2(t,x) =-\frac23t+x. $ 
Next, let
$
  \Theta(t,x)=\sum_{m=1}^\infty \Theta_m(t,x).
$
After some recursion we are led to the Volterra-type equation
\begin{equation}
  \label{eq:2.10}
  \frac{t(1-\ee^{-2t})}{1-2t-\ee^{-2t}} \Theta(t,x)=\int_x^t \Theta(y,x)\D y-1,
\end{equation}
with solution  
\begin{equation}
  \label{eq:2.11}
  \Theta(t,x)=-\exp\!\left(-4\int_x^t \frac{1-y-(1+y)\ee^{-2y}}{(1-\ee^{-2y})(1-2y-\ee^{-2y})}\D y\right) \frac{1-2x-\ee^{-2x}}{x(1-\ee^{-2x})}\D\xi.
\end{equation}
Finally, we  integrate the contribution of the individual $\sigma_\tau$s, scaled by $\alpha(\tau)$, from each tree, for all Magnus trees:
$  
\sigma(t) = \int_0^t \sum_{m=0}^\infty \sum_{\tau\in\bb{T}_m} \alpha(\tau)\sigma_\tau(\xi)\D\xi = \int_0^t f(x)\D x+\int_0^t \sum_{m=1}^\infty \sum_{\tau\in\bb{T}_m} \alpha(\tau) \int_0^\xi K_\tau(\xi,x)\tilde{f}(x)\D x\D\xi.
$
Swapping integration and summation, we have
$
\sigma(t) = \int_0^t f(x)\D x+\int_0^t xf'(x) \int_x^t \Theta(\xi,x)\D\xi\D x.
$
Substituting \R{eq:2.10},
 we attain our desired goal
  \[
  \sigma(t)=\int_0^t f(x)\D x+\int_0^t xf'(x) \left[ \frac{t(1-\ee^{-2t})}{1-2t-\ee^{-2t}} \Theta(t,x)+1\right]\!\D x ,
  \]
   or
\begin{eqnarray}
  \label{eq:2.12}
   \sigma(t) &=&tf(t)+\frac{t(1-\ee^{-2t})}{1-2t-\ee^{-2t}} \int_0^t xf'(x)\Theta(t,x)\D x.
\end{eqnarray}
Here we used integration by parts,
$
  \int_0^t xf'(x)\D x=tf(t)-\int_0^t f(x)\D x.
$
With \eqref{eq:2.11}, everything is now explicit.
Combining $\sigma$  in  \eqref{eq:2.12} with Theorem \eqref{theorem:magnus:expansion:isomerisation:first:pass}, we have now found the (complete!) Magnus expansion of isomerisation.

Note that \R{eq:2.12} is bounded for all $t\geq0$, because $t(1-\ee^{-2t})/(1-2t-\ee^{-2t})$ is bounded\footnote{Actually, it is analytic.} for all $t\in\BB{R}$.
As a consequence, \textit{the Magnus series \eqref{eq:Magnus:expansion:form:first:pass} for isomerisation converges for every $t\geq0$.}
That is a significant finding for isomerisation, because in general the Magnus series is only convergent for small times.

There is further significance. 
Our own exposition of the Magnus expansion here  also explains the intriguing numerical evidence appearing in earlier work that time-steps larger than the Moan--Niesen sufficient condition for convergence of the Magnus expansion can be taken while still maintaining good accuracy with Magnus-based numerical methods \cite[Figure 1]{KormannMacNamara2016}.
That good experience of taking larger time steps with Magnus-based methods has  previously been reported in numerous numerical studies in the context of the Schr\"odinger equation, and was eventually carefully explained by Hochbruck and Lubich \cite{HochbruckLubich03}.
We are also seeing it here in a novel context of master equations, although our explanation via the Magnus expansion shows that same good experience in this novel context is for completely different reasons.

\subsection{A role for automorphisms}
Theorem \eqref{theorem:magnus:expansion:isomerisation:first:pass} and  \eqref{eq:2.12} tell us the answer to the question of  the matrix $\Omega(t)$ in the Magnus expansion.
Ultimately, we want the solution \eqref{eq:ODE:magnus:form}. 
For that, we need the exponential, $\exp( \MM{\Omega}(t) )$.
This is an opportunity to show how automorphisms can simplify exponentials arising in master equations.

Let $P$ be the $(N+1)\times(N+1)$ {\em persymmetric identity}: $P_{i,j}=1$ if $j=N-i$, and is zero otherwise. 
Note $P\in\CC{O}(N+1)\cap\CC{Sym}(N+1)$ so $P$ is an {\em orthogonal involution:\/} $P^{-1}=P^\top=P$ and $P^2=I$.
Matrix multiplication confirms the useful properties
\begin{equation}
  \label{eq:3.1}
  PA^{[0]}P=A^{[0]},\qquad PA^{[1]}P=-A^{[1]}.
\end{equation}

Being an orthogonal involution, $P$ defines an inner automorphism on $\GG{gl}(N+1)$, namely
$
  \iota(B)=PBP$
  for 
$
B\in\GG{gl}(N+1).
$
Following \cite{munthekass01gpd}, we let
$
  \GG{k}=\{B\in\GG{gl}(N+1)\,:\, \iota(B)=B\} 
$
and
$
\GG{p}=\{B\in\GG{gl}(N+1)\,:\, \iota(B)=-B\}
$
be the {\em fixed points\/} and {\em anti-fix points\/} of the automorphism $\iota$. 
Here is a  list of the three main features of our general strategy. 
First,  in the {\em Generalised Cartan Decomposition\/},  $\GG{gl}(N+1)=\GG{k}\oplus\GG{p}$. 
That is, given $B\in\GG{gl}(N+1)$, we split it into
$
  \frac12[B+\iota(B)]\in\GG{k}
$
and 
$
 \frac12[B-\iota(B)]\in\GG{p}.
$
Second, here $\GG{k}$ is a subalgebra of $\GG{gl}(N+1)$, while $\GG{p}$ is a {\em Lie triple system:\/}
$
  [\GG{k},\GG{k}],[\GG{p},\GG{p}]\subseteq\GG{k}
$
and
$
[\GG{k},\GG{p}],[\GG{p},\GG{k}]\in\GG{p}.
$
Third, letting $B=k+p$ where $k\in\GG{k}$ and $p\in\GG{p}$, we have 
 (and we will apply this form to our example momentarily)
\[
\ee^{tB}=\ee^X\ee^Y,
\]
 where $X\in\GG{k}$, $Y\in\GG{p}$ have the Taylor expansion
\begin{eqnarray}
  \label{eq:3.2}
  X&=&tp-\frac12t^2 [p,k]-\frac16 t^3 [k,[p,k]] +t^4 \left(\frac{1}{24}[p.[p,[p.k]]]-\frac{1}{24}[k,[k,[p,k]]]\right)\hspace*{20pt}\\
  \nonumber
  &&\mbox{}+t^5\left(\frac{7}{360}[k,[p,[p,[p,k]]]]-\frac{1}{120}[k,[k,[k,[p,k]]]]-\frac{1}{180} [[p,k],[p,[p,k]]]\right)\\
  \nonumber
  &&\mbox{}+t^6\left(-\frac{1}{240}[p,[p,[p,[p,[p,k]]]]]+\frac{1}{180} [k,[k,[p,[p,[p,k]]]]]\right.\\
  \nonumber
  &&\hspace*{20pt}\mbox{}-\frac{1}{720} [k,[k,[k,[k,[p,k]]]]]+\frac{1}{720}[[p,k],[k,[p,[p,k]]]]\\
  \nonumber
  &&\hspace*{20pt}\left.\mbox{}+\frac{1}{180} [[p,[p,k]],[k,[p,k]]]\right)+\mathcal{O}(t^7),\\
  \label{eq:3.3}
  Y&=&tk-\frac{1}{12}t^3[p,[p,k]]+ t^5\left(\frac{1}{120}[p,[p,[p,[p,k]]]] +\frac{1}{720} [k,[k,[p,[p,k]]]]\right.\\
  \nonumber
  &&\hspace*{20pt}\left.\mbox{}-\frac{1}{240} [[p,k],[k,[p,k]]] \right)+\mathcal{O}(t^7).
\end{eqnarray}

Now, let $k=A^{[0]}$ and $p=A^{[1]}$ so by \R{eq:1.3},  $[p,k]=2p$. 
Look again at \R{eq:3.2} and \R{eq:3.3}. 
Each term necessarily contains the commutator $[p,k]$. 
Suppose that, except for this commutator, the term contains at least one additional $p$. 
Then, necessarily, it is zero. 
The reason is there must be a sub-term of the form
$
  [p,[k,[k,[\ldots,[k,[p,k]]\cdots]]]].
$
Beginning from the inner bracket, we replace $[p,k]$ by $2p$, so $[k,[p,k]]=-4p$, and so on, until we reach the outermost commutator: up to a power of 2, it will be $[p,p]=0$, proving our assertion. 
We deduce that the only terms surviving in \R{eq:3.2}, except for the first, are of the form (where in this line we are also introducing an adjoint operator notation $\CC{ad}_k^{r+1}$, to simplify expressions with nested commutators)
\begin{displaymath}
  [\overbrace{k,[k,\cdots,k}^{r\geq0\CC{\ times}},[p,k]]]=-\CC{ad}_k^{r+1}p =(-1)^{r} 2^{r+1} p 
\end{displaymath}
so
\begin{equation}
  \label{eq:3.4}
  X=-\sum_{r=1}^\infty \frac{t^r}{r!} \CC{ad}_k^{r-1} p=\frac{1-\ee^{-2t}}{2} p.
\end{equation}

Insofar as $Y$ is concerned, things are even simpler.
 While $p$ features an odd number of times in $X$ (because $X\in\GG{k}$), $Y\in\GG{p}$ implies that $p$ features there an even number of times. 
Except for the leading term, it features at least twice, and each such term must vanish, so
\begin{equation}
  \label{eq:3.5}
  Y=tk.
\end{equation}

Of course, what we  really need to compute is $\exp( \MM{\Omega}(t) ) = \exp(tA^{[0]}+\sigma(t) A^{[1]}) = \ee^{tB}=\ee^X\ee^Y$. 
For that, we  keep \R{eq:3.5} intact (hence $Y=tA^{[0]}$), but $t$ in \R{eq:3.4} need be replaced by $\sigma(t)/t$ (which is not problematic since $\sigma(0)=0$), i.e.
\begin{displaymath}
  X=\frac12 \left[1-\exp\!\left(-\frac{2\sigma(t)}{t}\right)\right] A^{[1]}.
\end{displaymath}
Thus automorphisms have simplified the required $\exp(tA^{[0]}+\sigma(t) A^{[1]}) $ to computing exponentials of $A^{[0]}$ and of $A^{[1]}$ separately.
Those come from the spectral decomposition, which we set about finding next.

\section{Spectra and pseudospectra of isomerisation matrices}
\label{sec:spectrum}

\subsection{Spectral decomposition of $A^{[0]}$}
We wish to determine the eigenvalues and eigenvectors of $A^{[0]}$. 
They are essentially given by  \cite[Theorem 2.1]{EdelmanRoadfromKactoKac1994}. 
Here we provide an alternative proof and a formula for the eigenvectors.
\begin{theorem}
\label{theorem:eigenvalues:A0}
  The spectrum of $A^{[0]}$ is 
  \[
  \{-2r\,:\,r=0,1,\ldots,N\}.
  \] 
   Moreover, an (unnormalised) eigenvector corresponding to the eigenvalue $-2r$, for $r=0,\ldots,N$, is
  \begin{eqnarray}
    \label{eq:3.6}
    v_m&=&(-1)^m {r\choose m} \hyper{2}{1}{-N+r,-m}{r-m+1}{-1},\qquad m=0,\ldots,r,\\
    \label{eq:3.7}
    v_m&=&(-1)^r {{N-r}\choose{m-r}} \hyper{2}{1}{-N+m,-r}{m-r+1}{-1},\qquad m=r,\ldots,N.
  \end{eqnarray}
  where ${}_kF_\ell$ is the generalized hypergeometric function.
\end{theorem}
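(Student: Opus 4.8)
The plan is to turn the eigenvalue problem into a first-order linear ODE for the generating function $\Phi(z)=\sum_k v_k z^k$, solve that ODE in closed form, read off which $\lambda$ admit polynomial solutions (that gives the spectrum), and expand the resulting polynomials to obtain the eigenvectors. The spectrum alone also follows from the known eigenvalues of the Clement/Kac matrix, but we give the self-contained generating-function argument.

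Because $A^{[0]}$ is tridiagonal, the eigenvector equation $A^{[0]}v=\lambda v$ is the three-term recurrence
\[
(N-k+1)\,v_{k-1}-(N+\lambda)\,v_k+(k+1)\,v_{k+1}=0,\qquad k=0,1,\ldots,N,
\]
with the conventions $v_{-1}=v_{N+1}=0$, which are exactly what the absence of those entries forces (e.g.\ the $k=0$ row reads $-(N+\lambda)v_0+v_1=0$). Setting $\Phi(z)=\sum_{k=0}^{N}v_k z^k$, multiplying the $k$th equation by $z^k$ and summing, the shifts $k\mapsto k\pm1$ collapse the recurrence to the single ODE
\[
(1-z^2)\,\Phi'(z)=\bigl(N+\lambda-Nz\bigr)\,\Phi(z).
\]
Conversely, the coefficient vector of any polynomial solution of degree at most $N$ satisfies the recurrence (the $z^{N+1}$-coefficient of the ODE vanishes automatically), so the two problems are equivalent.

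Next I would integrate the ODE by partial fractions: $\Phi'/\Phi=(N+\tfrac12\lambda)/(1+z)+(\tfrac12\lambda)/(1-z)$, whence $\Phi(z)=c\,(1+z)^{N+\lambda/2}(1-z)^{-\lambda/2}$. For this to be a polynomial of degree $\le N$ both exponents must be nonnegative integers; writing $\lambda=-2r$ this says precisely $r\in\{0,1,\ldots,N\}$, and then $\deg\Phi=N$. This produces $N+1$ distinct values $-2r$, which, $A^{[0]}$ being $(N+1)\times(N+1)$, must be the entire spectrum, each eigenvalue simple, with the generating function of the $-2r$-eigenvector equal, up to scaling, to $\Phi_r(z)=(1+z)^{N-r}(1-z)^{r}$.

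It remains to match the coefficients of $\Phi_r$ to \eqref{eq:3.6}--\eqref{eq:3.7}. Expanding by the Cauchy product gives $v_m=\sum_j(-1)^j\binom{r}{j}\binom{N-r}{m-j}$, a terminating sum that I would rewrite as a Gauss hypergeometric value at $-1$: for $m\le r$, substituting $j=m-k$ and factoring out $\binom{r}{m}$ turns the binomial-coefficient ratios into Pochhammer quotients and yields \eqref{eq:3.6}; for $m\ge r$, substituting $j=r-k$ and factoring out $\binom{N-r}{m-r}$ yields \eqref{eq:3.7}; the two agree on the overlap $m=r$. I expect this last bookkeeping — pinning down the normalisation and the choice of hypergeometric parameters — to be the only genuinely fiddly part; everything else is essentially forced. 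As sanity checks, the persymmetry $PA^{[0]}P=A^{[0]}$ forces $v_{N-m}=(-1)^{r}v_m$, which is manifest in $\Phi_r$ and links the two formulas, while $r=0$ recovers the binomial stationary vector $v_m=\binom{N}{m}$.
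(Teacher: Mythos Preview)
Your proposal is correct and follows essentially the same route as the paper: introduce the generating function of the eigenvector, derive from the three-term recurrence that it must equal $(1+z)^{N+\lambda/2}(1-z)^{-\lambda/2}$, force polynomiality to obtain $\lambda=-2r$, and expand $(1+z)^{N-r}(1-z)^r$ to read off the coefficients in hypergeometric form. The paper only sketches these steps, so your version is a faithful fleshing-out rather than a different argument.
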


\begin{proof}
By definition, $\lambda$ is an eigenvalue of $A^{[0]}$ and $\MM{v} \neq \MM{0}$  a corresponding eigenvector if and only if
  \begin{equation}
    \label{eq:3.8}
    (N+1-m)v_{m-1}-(N+\lambda)v_m+(m+1)v_{m+1}=0,\qquad m=0,\ldots,N,
  \end{equation}
  with the boundary conditions $v_{-1}=v_{N+1}=0$. 
 One way to arrive at the theorem is to let
  \begin{displaymath}
    \mathcal{V}(t) := \sum_{m=0}^N v_m t^m
  \end{displaymath}
  and establish $\mathcal{V}=(1+t)^{N+\lambda/2}(1-t)^{-\lambda/2}$ using \R{eq:3.8}.
   Then impose conditions on $\lambda$ to ensure $\mathcal{V}$ is a polynomial of degree $N$. 
  The exact details of the eigenvectors $\MM{v}$ can come by expanding $(1+t)^{N+\lambda/2}(1-t)^{-\lambda/2}$.
\end{proof}

Incidentally, \R{eq:3.6}--\R{eq:3.7} reveal symmetry. 
Denoting the eigenvector corresponding to the eigenvalue $-2r$ by $\MM{v}^{[r]}$, we have: 
$
  v_{N-m}^{[r]}=(-1)^{m-r}v_m^{[N-r]}, \; m=0,\ldots,N.
$

What else can we say about the eigenvector matrix $V=[\MM{v}^{0]},\MM{v}^{[1]},\ldots,\MM{v}^{[N]}]$? 
Computer experiments seem to demonstrate the remarkable result $V^2=2^N I$, hence
\begin{equation}
  \label{eq:3.9}
  V^{-1}=2^{-N}V
\end{equation}
and this is true: for brevity we omit the proof.
More importantly, having the spectral decomposition and having $V^{-1}$, we now have the exponential, exactly:
\begin{displaymath}
  \ee^{tA^{[0]}}=\frac{1}{2^N} V\Lambda(t)V, \qquad \mbox{where}\qquad \Lambda(t)=\CC{diag}\, \left(1,\ee^{-2t},\ee^{-4t},\cdots,\ee^{-2Nt} \right).
\end{displaymath}
It is tempting to compute matrix exponentials via diagonalization.
In general, this is not necessarily a good numerical choice, even in situations where the spectral decomposition is cheaply available.
An issue is that the condition number of the eigenvector matrix can be very large, as happens here\footnote{In hindsight, such poor conditioning of the eigenvector matrix was to be expected because $A^{[0]}$ exhibits a humongous pseudospectrum.
The best case scenario is when eigenvectors form an orthogonal basis (consistent with our intuition from numerical linear algebra that orthogonal matrices have the ideal condition number of $1$), as happens in the real symmetric case.
Pseudospectra measures the departure of a \textit{nonnormal matrix} from that good orthogonal case.
Our example has eigenvectors in Theorem~\ref{theorem:eigenvalues:A0} that are far from orthogonal.} ---  $\kappa(V)$ grows quickly with $N$.
Also, expressions such as $\ee^{-2Nt}$ are at risk of underflow error.

\subsection{A Jordan form of $A^{[1]}$}
Unlike $A^{[0]}$, the matrix $A^{[1]}$ is not diagonalizable.
It can still be usefully factorized in
\begin{theorem}
The Jordan form of $A^{[1]}$ is 
\begin{equation}
  \label{eq:3.12}
  A^{[1]}=WEW^{-1}, 
\end{equation}
where $E$ is the standard {\em shift matrix\/}, with $E_{i,j}= 1$ if $j=i+1$ and is zero otherwise,  while $W$ is a lower-triangular matrix,
\begin{displaymath}
  W_{m,n}=
  \begin{case}
    0, & m\leq n-1,\\[4pt]
    \displaystyle \frac{(-1)^{m-n}}{n!} {{N-n}\choose{m-n}}, & m\geq n,
  \end{case}\qquad m,n=0,\ldots,N.
\end{displaymath}
An immediate consequence of this Jordan form  \eqref{eq:3.12}  is that $A^{[1]}$ is \textit{nilpotent}.
\end{theorem}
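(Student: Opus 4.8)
The plan is to reduce \R{eq:3.12} to a single matrix commutation relation plus an elementary binomial identity. Since $W$ is lower triangular with diagonal entries $W_{n,n}=1/n!\neq0$, it is invertible, so \R{eq:3.12} is equivalent to $A^{[1]}W=WE$; and since $E$ is already a Jordan matrix --- one nilpotent block of size $N+1$ at the eigenvalue $0$ --- establishing this relation simultaneously identifies the Jordan form (uniqueness then makes it \emph{the} Jordan form). First I would write out both sides entrywise. The shift matrix moves columns, so $(WE)_{m,n}=W_{m,n-1}$, while the tridiagonal structure of $A^{[1]}$ in \R{eq:A0:A1:definitions} gives
\[
  (A^{[1]}W)_{m,n}=(N-2m)W_{m,n}+(m+1)W_{m+1,n}+(m-1-N)W_{m-1,n},
\]
under the convention that $W_{i,j}$ vanishes whenever an index leaves $\{0,\ldots,N\}$; this convention also disposes of the boundary rows $m=0,N$, where the offending entries of $A^{[1]}$ are absent and the binomial formula for $W$ returns zero anyway.

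Next I would substitute $W_{m,n}=(-1)^{m-n}\binom{N-n}{m-n}/n!$. Setting $a=m-n$ and $b=N-n$ and clearing the common factor $(-1)^{a+1}/n!$, the equation $(A^{[1]}W)_{m,n}=W_{m,n-1}$ collapses --- with $n$ cancelling out entirely, which is the pleasant surprise --- to the identity
\[
  (2a-b)\binom{b}{a}+(a+1)\binom{b}{a+1}+(a-1-b)\binom{b}{a-1}=0 .
\]
This follows immediately from Pascal's rule $\binom{b+1}{a+1}=\binom{b}{a}+\binom{b}{a+1}$ together with the absorption identities $(a+1)\binom{b}{a+1}=(b-a)\binom{b}{a}$ and $(a-1-b)\binom{b}{a-1}=-a\binom{b}{a}$, which reduce the left-hand side to $(2a-b+b-a-a)\binom{b}{a}$. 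The degenerate cases $m=n$ and $n=0$ are covered by the conventions $\binom{b}{j}=0$ for $j<0$ and $1/(-1)!=0$. This proves \R{eq:3.12}.

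Finally, nilpotency follows with no further work from the Jordan form: $E$ is the $(N+1)\times(N+1)$ shift, so $E^{N+1}=O$, and hence $(A^{[1]})^{N+1}=WE^{N+1}W^{-1}=O$. I expect the only genuine bookkeeping to be assembling the binomial identity and checking that the three-term recursion for $(A^{[1]}W)_{m,n}$ is consistent on the first and last rows and on the first column; once the zero-conventions above are in force, both are routine, so I do not anticipate a serious obstacle --- the real content of the theorem is the choice of the transforming matrix $W$, which the statement already hands us.
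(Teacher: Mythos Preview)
Your proof is correct and follows essentially the same route as the paper: reduce \R{eq:3.12} to the equivalent relation $A^{[1]}W=WE$ and verify it entrywise using the tridiagonal form of $A^{[1]}$, with nilpotency read off from $E^{N+1}=O$. The only cosmetic difference is that you package the verification as a single binomial identity in $a=m-n$, $b=N-n$ (after using Pascal's rule to absorb the $n\binom{b+1}{a+1}$ term coming from $W_{m,n-1}/(n-1)!$), whereas the paper splits into the cases $n\geq m+2$, $n=m+1$, $n=m$, $n\leq m-1$ and simplifies each separately.
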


\begin{proof}
The Jordan form \R{eq:3.12} is equivalent to $A^{[1]}W=WE$ and the latter is easier to check.
 The matrix $WE$ is easy to find because $E$ is the shift matrix: each column of $W$ is shifted rightwards, the $N$th column disappears, and the zeroth column is replaced by zeros, so
\begin{displaymath}
  (WE)_{m,n}=
  \begin{case}
    0, & n=0,\\
    W_{m,n-1}, & n=1,\ldots,N.
  \end{case}
\end{displaymath}
We proceed to evaluate $A^{[1]}W$ and demonstrate that it is the same.

For every $m,n=0,\ldots,N$ (and with $A^{[1]}_{0,-1}=A^{[1]}_{N,N+1}=0$) we have
\begin{displaymath}
  (A^{[1]}W)_{m,n}=A^{[1]}_{m,m-1}W_{m-1,n}+A^{[1]}_{m,m}W_{m,n}+A^{[1]}_{m,m+1}W_{m+1,n}.
\end{displaymath}
For $n\geq m+2$ this obviously vanishes. 
For $n=m+1$,
$
  A_{m,m+1}^{[1]}W_{m+1,m+1}=\frac{1}{m!}=W_{m,m}
$
is all that survives,  and for $n=m$
\begin{displaymath}
  A_{m,m}^{[1]}W_{m,m}+A^{[1]}_{m,m+1}W_{m+1,m}=-m\frac{N+1-m}{m!}=
  \begin{case}
    0, & m=0,\\
    W_{m,m-1}, & m\geq1.
  \end{case}
\end{displaymath}
Finally, for $n\leq m-1$ all three terms are nonzero and their sum is
\begin{eqnarray*}
  &&(-N+m-1)\frac{(-1)^{m-1-n}}{n!} {{N-n}\choose{m-1-n}} +(N-2m)\frac{(-1)^{m-n}}{n!} {{N-n}\choose{m-n}} \\
  &&\mbox{}+(m+1)\frac{(m+1-n)}{n!} {{N-n}\choose{m+1-n}}\\
  &=&\frac{(-1)^{m-n} n(N-n+1)!}{n!(m-n+1)!(N-m)!}=
  \begin{case}
    0, & n=0,\\
    W_{m,n-1}, & n\geq1
  \end{case}
\end{eqnarray*}
and we are done.
\end{proof}

Next, we set about applying our newly found Jordan form to find the matrix exponential.
Let $ C = \CC{diag}\, \left(0!,1!,2!,\cdots,N! \right)$ be a diagonal matrix 
and
\begin{displaymath}
  Z_{m,n} =
  \begin{case}
    0, & m\leq n-1,\\[4pt]
    \displaystyle (-1)^{m-n} {{N-n}\choose{m-n}}, & m\geq n,
  \end{case}\qquad m,n=0,\ldots,N.
\end{displaymath}
As is trivial to verify, $W=ZC^{-1}$, so
$
  A^{[1]} = ZC^{-1}ECZ^{-1}.
$
Equally trivial to verify is that $Z^{-1}$ is given by 
\begin{displaymath}
  Z^{-1} = \tilde{Z}_{m,n} :=
  \begin{case}
    0, & m\leq n-1,\\[4pt]
    \displaystyle  {{N-n}\choose{m-n}}, & m\geq n,
  \end{case}\qquad m,n=0,\ldots,N.
\end{displaymath}
Consequently,
$
  A^{[1]}=ZC^{-1}EC\tilde{Z}.  
$
We have proved
\begin{theorem}
 The matrix exponential is, in an explicit form,
\begin{equation}
  \label{eq:3.13}
 \ee^{tA^{[1]}}=ZC^{-1}\ee^{tE}C\tilde{Z}.
\end{equation}
\end{theorem}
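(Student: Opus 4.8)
The plan is to read the result straight off the factorization already in hand. By the preceding theorem together with the identities $W = ZC^{-1}$ and $Z^{-1}=\tilde{Z}$, we have $A^{[1]} = ZC^{-1}\,E\,C\tilde{Z}$, and the two outer factors are mutually inverse, since $ZC^{-1}\cdot C\tilde{Z} = Z\tilde{Z} = I$. Hence, writing $S := ZC^{-1}$ (so that $S^{-1} = C\tilde{Z}$), the matrix $A^{[1]} = SES^{-1}$ is conjugate to the shift matrix $E$, and the whole theorem reduces to the interplay between conjugation and the exponential.

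First I would invoke the elementary fact that conjugation commutes with the matrix exponential: since $(SES^{-1})^k = SE^kS^{-1}$ for every $k\ge0$, summing the everywhere-convergent exponential series term by term yields
\[
\ee^{tA^{[1]}} \;=\; \sum_{k=0}^{\infty}\frac{t^k}{k!}\,(SES^{-1})^k \;=\; S\!\left(\sum_{k=0}^{\infty}\frac{t^k}{k!}\,E^k\right)\!S^{-1} \;=\; S\,\ee^{tE}\,S^{-1} \;=\; ZC^{-1}\,\ee^{tE}\,C\tilde{Z},
\]
which is precisely \eqref{eq:3.13}. Because $E$ is nilpotent with $E^{N+1}=O$, the inner series in fact truncates, and one may additionally record $(\ee^{tE})_{i,j} = t^{\,j-i}/(j-i)!$ for $0\le i\le j\le N$ and zero otherwise, so that the right-hand side is entirely explicit.

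I do not expect a genuine obstacle here: all the substance lives in the Jordan factorization \eqref{eq:3.12} of the previous theorem, which has already been verified by the direct computation $A^{[1]}W = WE$. The only point needing a little care is bookkeeping — keeping straight which side the diagonal factor $C = \CC{diag}\,(0!,1!,\ldots,N!)$ sits on, i.e.\ that $W = ZC^{-1}$ and $W^{-1} = C\tilde{Z}$ and not the reverse; once that is pinned down, \eqref{eq:3.13} is immediate. A self-contained alternative that avoids citing the earlier theorem would be to verify $S^{-1}A^{[1]}S = E$ directly, by the same four-case computation on binomial coefficients (the cases $n\ge m+2$, $n=m+1$, $n=m$, $n\le m-1$) used to establish \eqref{eq:3.12}, and then apply the exponential identity above; but that route merely reproves the Jordan form en passant.
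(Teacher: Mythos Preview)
Your proposal is correct and follows essentially the same route as the paper: the paper establishes $A^{[1]}=ZC^{-1}EC\tilde{Z}$ from the Jordan form $A^{[1]}=WEW^{-1}$ together with $W=ZC^{-1}$ and $Z^{-1}=\tilde{Z}$, and then simply declares the exponential identity proved, leaving the step ``conjugation commutes with the exponential'' implicit. You have merely made that last step explicit via the power series, which is a welcome clarification but not a different argument.
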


\begin{figure}
  \centering
  \includegraphics[scale=0.9]{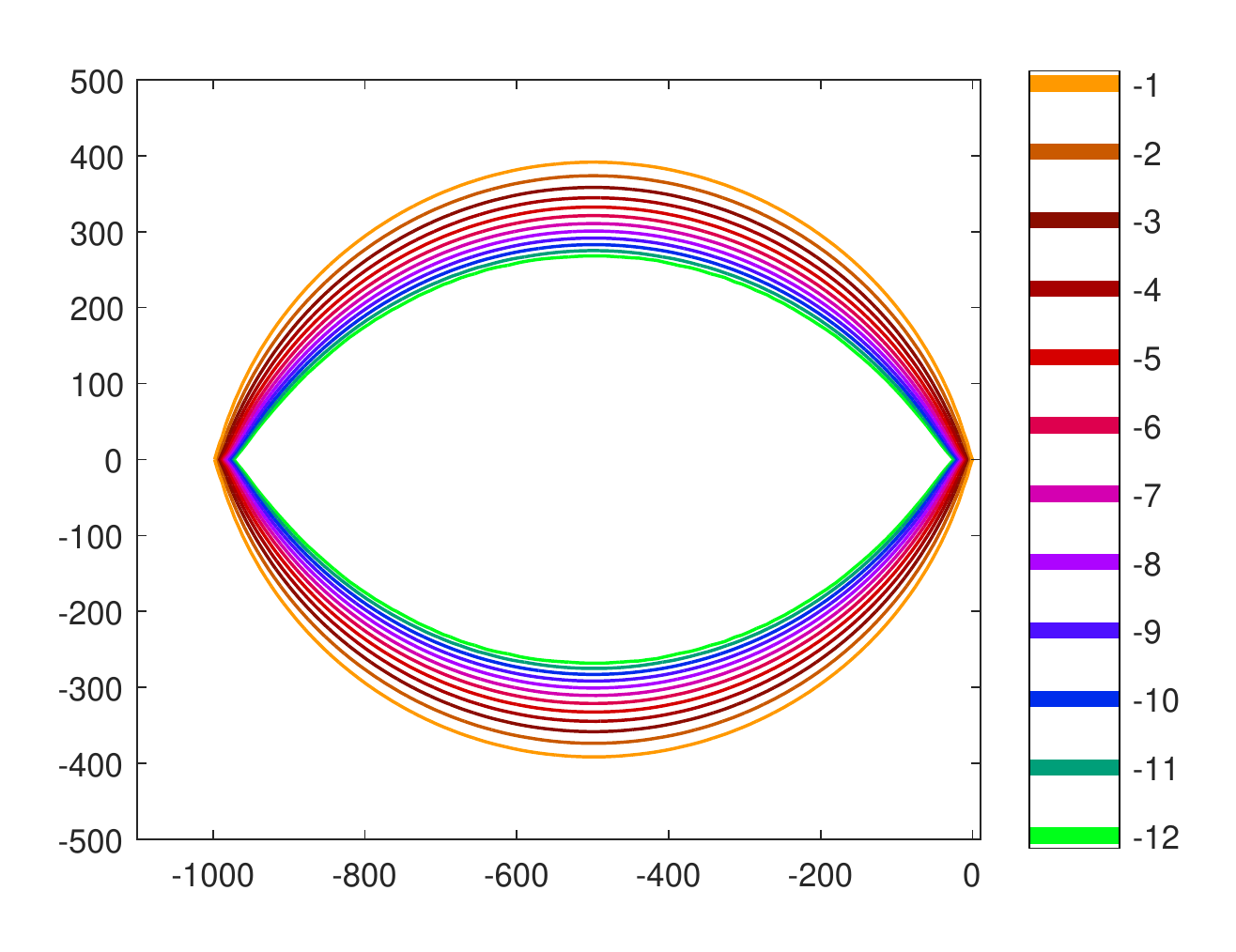}
  \caption{An `almond eye:' Pseudospectrum \cite{TreEmb05} of a $500 \times 500$ example of the $A^{[0]}$ matrix, defined in \eqref{eq:A0:A1:definitions},  as computed by Eigtool \cite{EigTool2002}. 
  Contours of the minimum singular value, $s_{\textrm{min}}(zI-A)$, are displayed on a log scale.  }
  \label{fig:A0}
\end{figure}

\subsubsection{Evaluating the exponential via \eqref{eq:3.13}}
Let $\MM{u}\in\BB{R}^{N+1}$ (again, indexed from zero).
We wish to compute $\MM{y}=\tilde{Z}\MM{u}$. 
A na\"ive approach would require $\mathcal{O}(N^2)$ flops but herewith an algorithm that accomplishes this in just $\mathcal{O}(N^2)$ {\em additions,\/} without requiring multiplications!

For reasons that become clear, it is useful to indicate $N$ explicitly in the notation, i.e.\ $\MM{y}^{[N]}=\tilde{Z}^{[N]}\MM{u}^{[N]}$. 
Start by observing that
\begin{displaymath}
  y_m^{[N]}=\sum_{n=0}^m {{N-n}\choose{m-n}} u_n,\qquad m=0,\ldots,N
\end{displaymath}
(no need to place superscripts on $u_n$). 
Therefore, for $m=0,\ldots,N-1$,
\begin{eqnarray*}
  y_m^{[N]}+y_{m+1}^{[N]}&=&\sum_{n=0}^m{{N-n}\choose{m-n}}u_n+\sum_{n=0}^{m+1} {{N-n}\choose{m+1-n}} u_n=\sum_{n=0}^{m+1}{{N+1-n}\choose{m+1-n}} u_n\\
  &=&y_{m+1}^{[N+1]}.
\end{eqnarray*}
Rewrite this as
\begin{equation}
  \label{eq:3.14}
  y_m^{[N]}=y_{m-1}^{[N-1]}+y_m^{[N-1]},\qquad m=0,\ldots,N-1
\end{equation}
(in the case $m=0$ of course $y_0^{[N]}=y_0^{[N-1]}=u_0$, so the above is consistent with $y_{-1}^{[N]}=0$.) 
Now proceed from $y_0^{[0]}=u_0$ and then, for $M=1,2,\ldots,N$, add
\begin{eqnarray*}
  y_m^{[M]}&=&y_{m-1}^{[M-1]}+y_m^{[M-1]},\qquad m=0,\ldots,M-1,\\
  y_M^{[M]}&=&\sum_{n=0}^M u_n=y_{M-1}^{[M-1]}+u_M.
\end{eqnarray*}
and we are done.

 Of course, similar reasoning applies also to a product $\MM{y}=Z\MM{u}$. 
 The only difference vis-\'a-vis \R{eq:3.14} is that now
 $
  y_m^{[N]}=y_m^{[N-1]}-y_{m-1}^{[N-1]},\; m=0,\ldots,N-1,
$
therefore the recursion steps are
\begin{eqnarray*}
  y_m^{[M]}&=&y_m^{[M-1]}-y_{m-1}^{[M-1]},\qquad m=0,\ldots,N-1,\\
  y_M^{[M]}&=&\sum_{m=0}^M (-1)^{M-n}u_n=-y_{M-1}^{[M-1]}+u_N.
\end{eqnarray*}

Having dealt with the $\tilde{Z}\MM{u}$ and the $Z\MM{u}$ components, we are left only with the $C^{-1}\ee^{tE}C$ portion of \eqref{eq:3.13}.
 We address that now.
 It is  trivial that
\begin{displaymath}
  (\ee^{tE})_{m,n}=
  \begin{case}
    \displaystyle \frac{t^{n-m}}{(n-m)!}, & m=0,\ldots,n,\\[8pt]
    0, & m=n+1,\ldots,N.
  \end{case}
\end{displaymath}
Therefore (cf.\ \R{eq:3.13})
\begin{displaymath}
  (C^{-1}\ee^{tE}C)_{m,n}=
  \begin{case}
    \displaystyle {n\choose m} t^{n-m} & m=0,\ldots,n,\\[12pt]
    0, & m=n+1,\ldots,N.
  \end{case}
\end{displaymath}

Let us pause to reflect on the exact exponentials that we have just found.
We expect the solution to our model of isomerisation to be a binomial distribution \cite{Jahnke2007}.
In general, that means we expect a linear combination of the columns of the solution matrix $\exp (\Omega (t) )$   to be a binomial distribution, when the weights in that linear combination likewise come from a binomial distribution.
Perhaps the simplest example is that the first column of the solution of \R{eq:1.1}  must be a binomial distribution.

As an example, set $\MM{e}_0= (1,0, \ldots, 0)^{\top}$ and compute the leading column, $\ee^{q A^{[1]}}\MM{e}_0=Z(C^{-1}\ee^{q E}C)\tilde{Z}\MM{e}_0$.
Note that  $(\tilde{Z}\MM{e}_0)_m=\tilde{Z}_{m,0}={N\choose m}$.
So 
\[
[(C^{-1}\ee^{q E}C)\tilde{Z}\MM{e}_0]_m = {N\choose m} \sum_{n=0}^{N-m}{{N-m}\choose n}t^n={N\choose m} (1+q)^{N-m}
\]
 and after some simplifications,
\[
(\ee^{q A^{[1]}}\MM{e}_0)_m =[Z(C^{-1}\ee^{q E}C)\tilde{Z}\MM{e}_0] =(-1)^m  {N\choose m}q^m (1+q)^{N-m}.
\]
We are seeing on the right that the binomial distribution survives the first term in $X(t)=\ee^{tA^{[0]}}\ee^{q A^{[1]}}\MM{e}_0$, where $q=\sigma(t)/t$.
Thus, the explicit forms of our exponentials that we have derived allow us to confirm the  `binomial stays binomial' theorem  \cite{Jahnke2007}.

\subsection{Pseudospectra}
\label{sec:pseudospectra}
Having established exact analytic formul\ae{} for  spectral decomposition, we are now in a good position to compare  exact spectra to numerical estimates of the \textit{pseudospectra}  \cite{TreEmb05}.
Two striking contrasts between the numerically computed eigenvalues and the exact eigenvalues are worth pointing out.

First, we proved the matrix $A^{[1]}$ is nilpotent: \textit{exact eigenvalues are precisely zero}.
Nonetheless, $A^{[1]}$  has an enormous pseudospectrum, and standard numerical methods lead to wrongly computed  non-zero eigenvalues of a large magnitude.

Second, we found the eigenvalues of  $A^{[0]}$   in Theorem~\ref{theorem:eigenvalues:A0}, and they are \textit{purely real}. 
(Indeed, the same ideas described by Trefethen and Embree \cite{TreEmb05}  also show our $A^{[0]}$ is similar to a real symmetric matrix, so even before Theorem~\ref{theorem:eigenvalues:A0}, we knew eigenvalues had to be real.)
However, standard numerical methods to compute the eigenvalues wrongly produce complex numbers (!) with very large imaginary parts.

The reason for the numerical errors in computing the eigenvalues is that the eigenvalues of these matrices are very sensitive to small perturbations.
That phenomenal sensitivity is often characterised by the pseudospectra.
For $\epsilon>0$, the $\epsilon$-pseudospectrum is the region of the complex plane, $z \in \mathbb{C}$, where the norm of the \textit{resolvent} is large: $|| (z I - A)^{-1} || > 1/ \epsilon $.
In the $2$-norm, this is equivalent to the region where the minimum singular value, $s_{\textrm{min}}$, is small: $s_{\textrm{min}}(zI-A) < \epsilon$.

The pseudospectrum of the convection-diffusion operator is known to be significant \cite{RedTre94}, and master equations are closely related to convection-diffusion, suggesting they will also exhibit interesting pseudospectra.
Indeed, the matrices that arise in our applications of master equations to isomerizaiton exhibit an humongous pseudospectra.
They are examples of the class of \textit{twisted Toeplitz matrices and operators}, which have recently been understood to exhibit a distinctive pseudospectra, captivating more general interest \cite{TrefethenChapmanTwistedToeplitz}.

Figure~\ref{fig:A0} displays the pseudospectrum for $A^{[0]}$  and Figure~\ref{fig:A1} displays the pseudospectrum for $A^{[1]}$.
These are numerical estimates based on the algorithms underlying \textit{eigtool}.
In future work it may be possible to analytically bound the region of the complex plane where the pseudospectra is large.
For example, the pseudospectra of the convection-diffusion operator has been shown to be approximately bounded by a parabola \cite{RedTre94}, and such knowledge of this bounded region  has recently been exploited to develop effective contour integral methods based on inverse Laplace transform techniques.
Usually the idea of such methods is to choose a contour that stays away from the eigenvalues.
That works well for real symmetric matrices.
But if the operator has a significant pseudospectrum, then more is required: the contour must stay safely away from regions where the resolvent  $|| (z I - A)^{-1} || $ is large.
The figures here show some diversity in pseudospectra.
This might inspire research into a computational method that is \textit{adaptive}: instead of requiring detailed knowledge of the pseudospectrum in advance, we require computational methods that adapt the contour of integration so as to control  $|| (z I - A)^{-1} || $ to be, say, $\mathcal{O}(1)$.

\begin{figure}
  \centering
\begin{tabular}{c}
  \includegraphics[scale=0.8]{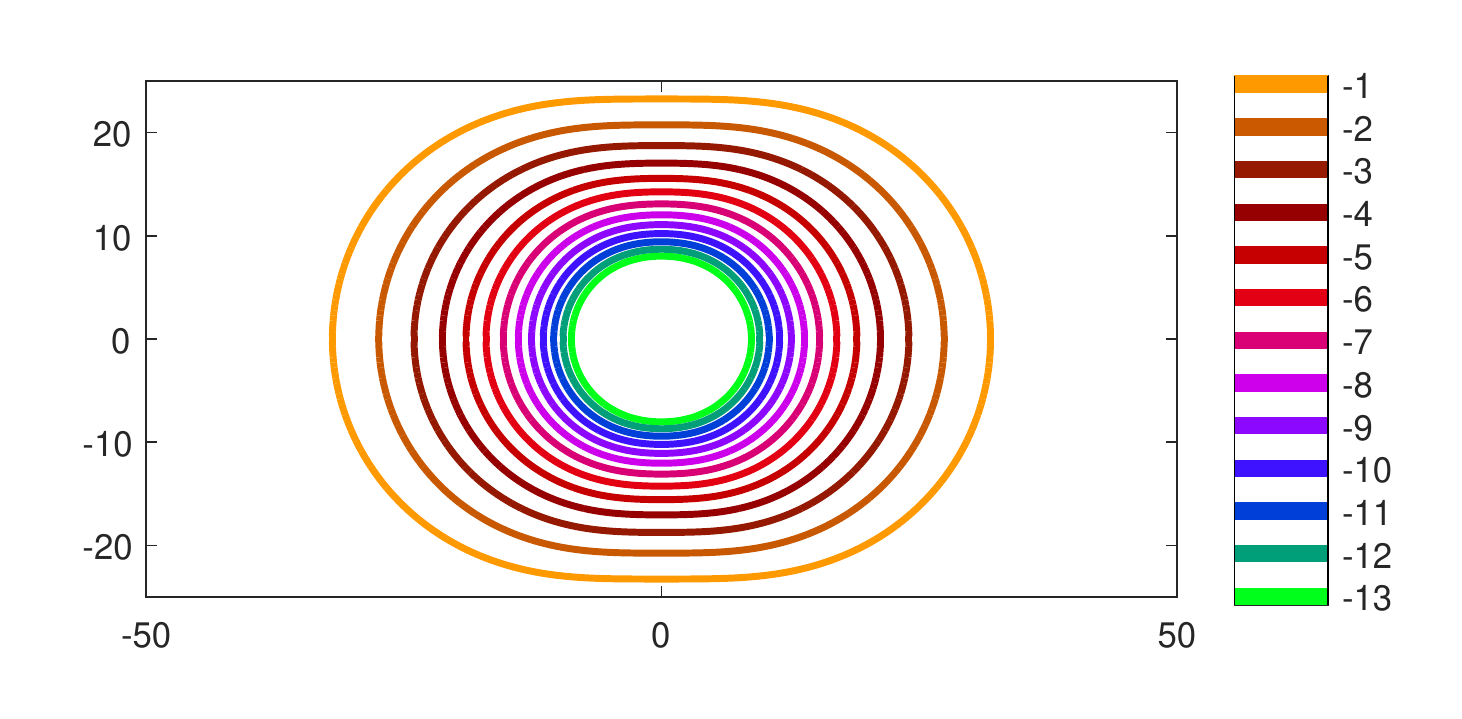}\\
  \includegraphics[scale=0.8]{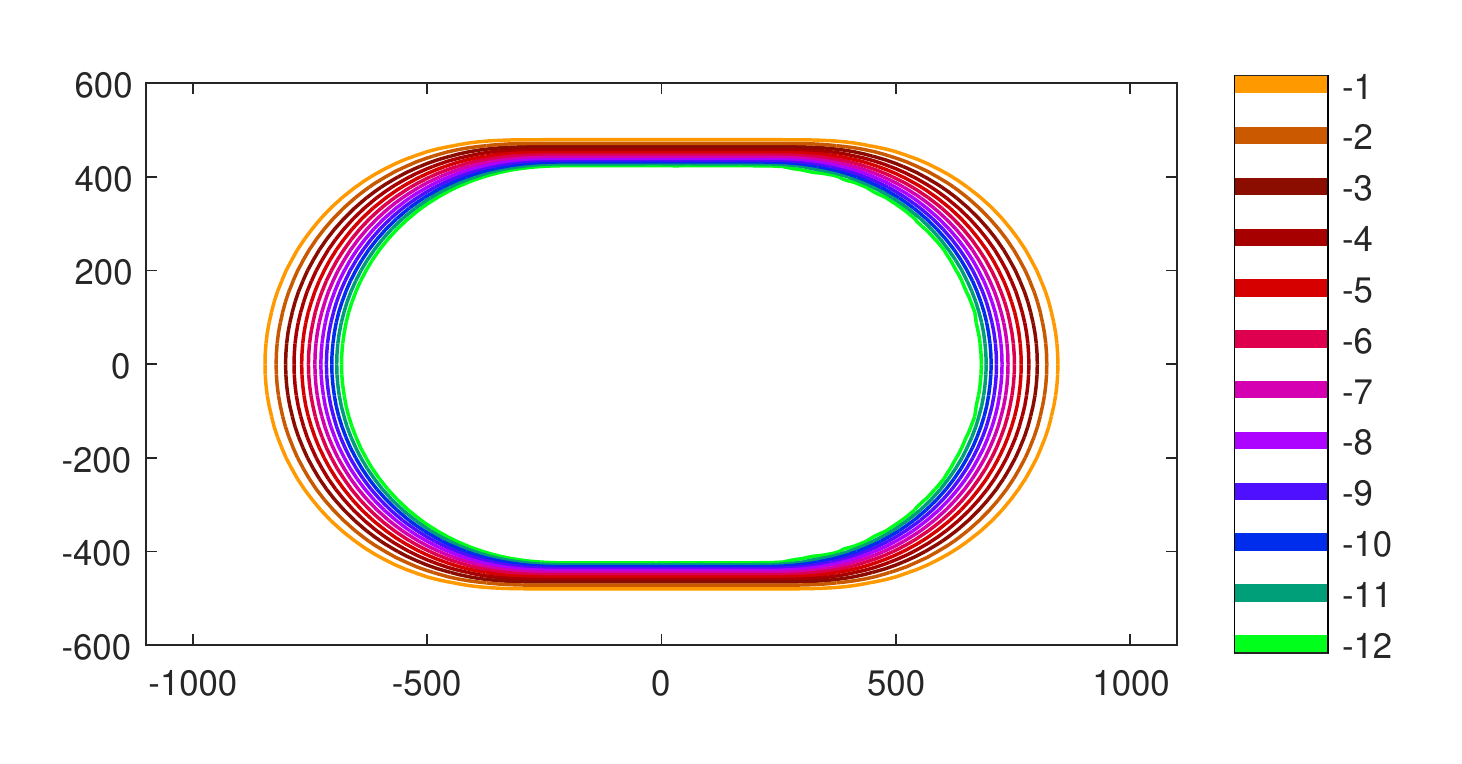}
\end{tabular}
  \caption{The `athletics track:' Pseudospectrum \cite{TreEmb05} of the $A^{[1]}$ matrix, defined in \eqref{eq:A0:A1:definitions},  as computed by Eigtool \cite{EigTool2002}.
   Top: $30 \times 30$. Bottom: $500 \times 500$.}
  \label{fig:A1}
\end{figure}

\section{Discussion}
Master equations and especially their applications will continue to occupy new directions in scientific computation for some time \cite{NewDirectionsTrefethenEssay2015}.
There is always the challenge of \textit{high dimensions}, for instance. 
Here is an incomplete list of contemporary topics where activity is growing fast.

  \subsection{Matrix functions of graph Laplacians}
  A general framework for models of biochemical kinetics has recently been elucidated in terms of graph Laplacians \cite{Gunawardena:2012aa}.
A simple example of a graph Laplacian on a line of nodes appears in \cite{StrMac14}, and, like the matrix exponential, it has been shown that a \textit{Mittag-Leffler function} \cite{MittagLefflerMonograph} of a graph Laplacian matrix is also a stochastic matrix \cite{MacNamaraFractionalEulerLimit2016}.
All of this suggests research into non-Markovian generalisations of Gillespie-like stochastic simulation algorithms allowing waiting times not exclusively drawn from an exponential distribution \cite{ShevCauchyIntegralMasterEqnPseudoSpectraCTAC2015}.

It is known that if we generalise \eqref{eq:constant:matrix:ODE} to a Caputo fractional derivative of order $0<\alpha<1$, $  \ud^{\alpha} / \ud t^{\alpha} $,  then the matrix exponential is generalised to the Mittag-Leffler function $E_{\alpha}$, so that \eqref{eq:constant:matrix:ODE} becomes
$ \ud^{\alpha} \MM{p}/ \ud t^{\alpha}  = \mathbb{A} \MM{p} $ with solution $ \bm{p}(t) = E_{\alpha}(t^{\alpha} \mathbb{A})  \bm{p}(0).$
This is assuming the coefficient matrix is constant.
However, if we allow a time-varying matrix,  $\mathbb{A}=\mathbb{A}(t)$, and generalise \eqref{eq:ODE:magnus:form} to $ \ud^{\alpha} \bm{p}/ \ud t^{\alpha}  = \mathbb{A}(t) \bm{p} $, then an important open question arises: how do we generalise the Magnus expansion of the solution?
There is certainly some work in the literature on discrete constructions of continuous-time random walks and their generalised master equations aimed at accommodating time-varying rates.
Nevertheless, the authors are not aware of a \textit{fractional generalisation of the Magnus expansion}. 
Given the  current interest in fractional processes and processes with memory, such a generalisation of the Magnus expansion would seem a timely contribution, and would presumably also suggest a fractional generalisation of the Baker--Campbell--Hausdorff formula as a special case.

\subsection{Products of matrix exponentials}
When matrices commute, a product of exponentials has an especially simple form.
 Evans,  Sturmfels \&  Uhler  recently showed how to successfully  exploit this property for master equations governing  birth-death processes \cite{UhlerSturmfelsBirthDeath2010}.
 
This computational approach has the potential for wider applications to master equations where tensor structures involving shift operators often arise. 
So let us revisit \R{eq:1.1} to find, {\em explicitly,\/} solutions (without Wilhelm Magnus and without Sophus Lie) in a way that generalises and suggests connections to products of exponentials.
To generalise \R{eq:1.1}, consider linearly independent matrices, $A$ and $B$, such that
\begin{equation}
  \label{eq:4.1}
  [A,B]=aA+bB
\end{equation}
for some $a,b\in\BB{R}$, not both zero, and the differential equation
\begin{equation}
  \label{eq:4.2}
  X'=[\alpha(t) A+\beta(t) B]X,\quad t\geq0,\qquad X(0)=I.
\end{equation}
Here $\alpha$ and $\beta$ are given scalar functions.

We wish to prove the solution of \R{eq:4.2} can be expressed in the form
\begin{equation}
  \label{eq:4.3}
  X(t)=\ee^{\rho_A(t)A}\ee^{\rho_B(t)B},
\end{equation}
where $\rho_A$ and $\rho_B$ are scalar functions obeying a certain ODE. Obviously, $\rho_A(0)=\rho_B(0)=0$.

Assume (without loss of generality) that $b\neq0$. 
Differentiating \R{eq:4.3} and substituting into \R{eq:4.2}, we have
$
   X'=\ee^{\rho_A A} (\rho_A'A+\rho_B'B)\ee^{\rho_B B}=(\alpha A+\beta B)\ee^{\rho_A A}\ee^{\rho_B B}
$
and, multiplying on the right by $\ee^{-\rho_B B}$, we have
\begin{equation}
  \label{eq:4.4}
  (\rho_A'-\alpha)A\ee^{\rho_A A}+\rho_B'\ee^{\rho_A A}B-\beta B\ee^{\rho_A A}=O.
\end{equation}

A proof by induction using \R{eq:4.1} shows 
\begin{equation}
  \label{eq:4.5}
  BA^m=(A+bI)^mB-\frac{a}{b} A[A^m-(A+bI)^m],\qquad m\in\BB{Z}_+.
\end{equation}
Consequently,
$
  B\ee^{\rho_A A} = \sum_{m=0}^\infty \frac{\rho_A^m}{m!} BA^m =
  $
  \[
  \sum_{m=0}^\infty \frac{\rho_A^m}{m!} (A+bI)^m B -\frac{a}{b}A\sum_{m=0}^\infty \frac{\rho_A^m}{m!} [A^m-(A+bI)^m]
   = \ee^{b\rho_A}\ee^{tA}B-\frac{a}{b} (1-\ee^{b\rho_A})A\ee^{\rho_A A}.
\]
Now substitute into \R{eq:4.4},
$
  (\rho_A'-\alpha)A\ee^{\rho_A A}+\rho_B'\ee^{\rho_A A}B-\beta\ee^{b\rho_A}\ee^{tA}B+\frac{a}{b}\beta (1-\ee^{b\rho_A})A\ee^{tA}.
$
Separating between $A\ee^{\rho_A A}$ and $\ee^{\rho_A A}B$ above, we obtain two ODEs for $\rho_A$ and $\rho_B$,
\begin{eqnarray}
  \label{eq:4.6}
  \rho_A'&=&\alpha-\frac{a}{b} \beta (1-\ee^{b\rho_A}),\qquad \rho_A(0)=0,\\
  \label{eq:4.7}
  \rho_B'&=&\beta \ee^{b\rho_A},\qquad \rho_B(0)=0,
\end{eqnarray}
reducing the computation of $\rho_A$ to a scalar ODE and of $\rho_B$ to quadrature.

Specialising to master equations, $\alpha\equiv1$, $\beta=f$, $a=0$ and $b=-2$, so \R{eq:4.6} becomes
$
  \rho_A(t)=t \textrm{ and } \rho_B(t)=\int_0^t \ee^{-2\tau}f(\tau)\D \tau.
$
Putting \R{eq:4.7} in \R{eq:4.6}, we obtain
$
  \rho_A'=\alpha-\frac{a}{b}\beta+\frac{a}{b}\rho_B'.
$
Multiplication by $b$ and integration implies the integral
$
  b\rho_A(t)-a\sigma(t)=b\int_0^t \alpha(\tau)\D\tau -a\int_0^t \beta(\tau)\D \tau.
$

\textit{Can all this be (further) generalised, beyond two exponentials?}
We now suggest the answer to this question is affirmative although applications form the subject of ongoing research.
Indeed what we have done thus far is to exemplify {\em precisely\/} the Wei--Norman approach of expressing the solution of a linear ODE using \textit{canonical coordinates of the second kind} \cite{wei64ogr}. 
Specifically, let $A:\BB{R}_+\rightarrow\GG{g}$, where $\GG{g}$ is a Lie algebra, $\dim\GG{g}=d$, and consider the ODE
\begin{equation}
  \label{eq:4.9}
  X'=A(t)X,\quad t\geq0,\qquad X(0)=I.
\end{equation}
Let $\mathcal{P}=\{P_1,P_2,\ldots,P_d\}$ be a basis of $\GG{g}$. 
Wei \& Norman \cite{wei64ogr} prove that for sufficiently small $t>0$ there exist functions $g_1,g_2,\ldots,g_d$ such that
\begin{equation}
  \label{eq:4.10}
  X(t)=\ee^{g_1(t)P_1}\ee^{g_2(t)P_2}\cdots\ee^{g_d(t)P_d}.
\end{equation}
This is the situation we have in \R{eq:1.1} or, with greater generality, in \R{eq:4.2}: $P_1=A$, $P_2=B$ and, because of \R{eq:4.1}, the dimension of the free Lie algebra spanned by $A$ and $B$ is  $d=2$. 
Interestingly enough,  this example does not feature in \cite{wei64ogr}.

Coordinates of the second kind have been used extensively in the theory of  Lie-group integrators  \cite{iserles00lgm} where it always followed an organising principle that also shows promise for master equations. 
Specifically, the assumption was -- unlike our simple $d=2$ example -- that $d$ is large (e.g.\ that $\GG{g}$ is the special orthogonal group of matrices $\CC{SO}(n)$, say, or the special linear group of matrices $\CC{SL}(n)$) and the basis $\mathcal{P}$ selected so that it is easy to evaluate the exponentials $\exp (g_k P_k)$ (e.g., using root space decomposition) \cite{celledoni01mam}. 

  \subsection{Pseudospectra of master equations}
  This is a subject worthy of more attention.
       For example, we have shown here that even simple isomerisation models exhibit a highly non-trivial pseudospectra.
       We conjecture  that \textit{Michaelis--Menten enzyme kinetics} and a whole host of other important models in biology also exhibit significant pseudospectra \cite{ShevCauchyIntegralMasterEqnPseudoSpectraCTAC2015,NewDirectionsTrefethenEssay2015}.
       In the usual model of Michaelis--Menten kinetics, a catalytic enzyme $E$ reversibly forms a complex intermediate $C$  with a substrate $S$, that is eventually irreversibly converted to a product $P$, viz.  $S+E \leftrightarrow C \rightarrow P+E$. 
       There is a need for visualisations of the pseudospectrum of such Michaelis-Menten kinetics, for example.
        Another open question is how the pseudospectrum of the usual model compares to the pseudospectrum of a more reasonable model suggested by Gunawardena to repent for the ``Original Thermodynamic Sin'' of including the irreversible reaction $C \rightarrow P+E$ \cite{GunawardenaOriginalSin}.

        As a demonstration of this topic going far beyond merely the isomerisation examples that we have studied here, we have also computed here in Figure~\ref{fig:TASEP} the pseudospectrum of the \textit{totally asymmetric exclusion process} (TASEP) \cite[Figure 9]{CorwinKPZNoticesAMS2016}.
        If all that is observed in the picture of the pseudospectrum is merely some `$\epsilon-$balls,' centred around each eigenvalue, and well-separated, then the situation is not interesting.
        For that is simply the picture we would expect for a well-behaved real symmetric matrix anyway. 
        To be interesting, more complex behaviour is required. 
        It is too early to tell for the TASEP, but our preliminary  numerical picture here in Figure~\ref{fig:TASEP} suggests it will turn out to be worthwhile pursuing.
        The figure depicts the case with six particles and we can already discern the beginnings of some interesting interactions emerging.
        Such examples of TASEP models have found applications to single molecule studies of RNA polymerase and protein synthesis.
        More generally exclusion processes have witnessed a renaissance of mathematical interest, partly in relation to exactly integrable probabilistic systems, the Kardar--Parisi--Zhang (KPZ) universality class, and the KPZ  stochastic partial differential equation   \cite{CorwinICM2014, HairerICM}.

    \textit{Random Matrix Theory} \cite{ActaNumericaRandomMatrixTheory2005}  connects to master equations.
     For example,  an important limiting distribution associated with the TASEP master equation is the famous Tracy--Widom distribution for the biggest eigenvalue of a large, random Hermitian matrix  \cite{CorwinICM2014}.
     Although less in the sense of the chemical master equation  (at least so far but that could change) and more in the physicists' sense of Wigner and Freeman Dyson, random matrix theory is also playing a role in recent studies of random graph Laplacians.
     The resulting distributions are very similar to the standard Gaussian ensembles but the special algebraic properties of graph Laplacians do lead to peculiar discrepancies that persist for large matrix dimension $N$   \cite{CarstenTimmRandomMatrixTheoryMasterEquation2009}.
    Interestingly, the \textit{Matrix-Tree Theorem}, which gives a formula for the stationary distribution (and confirmation of positivity) of such master equations in terms of sums of positive off-diagonal entries, seems yet to be exploited in this random matrix context.

\begin{figure}
  \centering
  \includegraphics[scale=0.8]{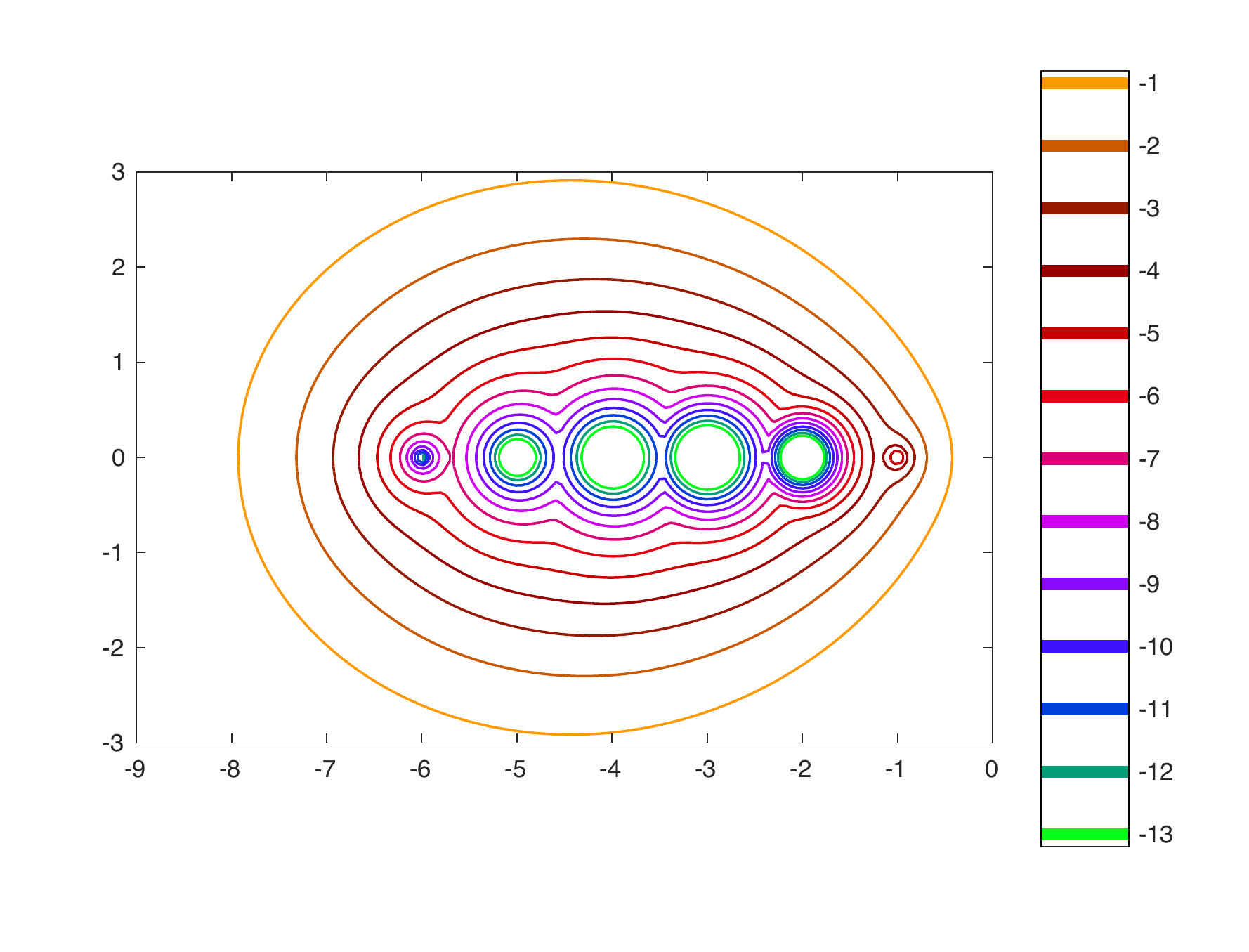}
  \caption{The `seed pod:' Pseudospectrum \cite{TreEmb05} of a $1513 \times 1513$ finite section of the singly infinite matrix associated with a totally asymmetric exclusion process (TASEP)  with 6 particles beginning in a `step' initial configuration \cite[Figure 9, q=0]{CorwinKPZNoticesAMS2016},  as computed by Eigtool \cite{EigTool2002}.  }
  \label{fig:TASEP}
\end{figure}

  \subsection{Magnus expansions and Kurtz's random time-change representation}
Denote the forward rate by $\alpha_f \left( \bm{x}(s), s \right)=c_1(s) n_1 $ and the backward rate by $\alpha_b \left( \bm{x}(s), s \right) =  c_2(s) n_2$.
Here $n_1$ and $n_2$ are the number of molecules of $S_1$ and $S_2$, respectively.
The Kurtz random time-change representation \cite{Kur80} of the sample paths  corresponding to our master equation  \R{eq:1.1} with initial state $\bm{x}(0)$ is
\begin{equation*}
\bm{x}(t) = \bm{x}(0)  +
\left(
\begin{array}{c}
-1\\
+1
\end{array}
\right) Y_1\left( \int_0^t \alpha_f \left( \bm{x}(s), s \right) \ud s \right)
 +
\left(
\begin{array}{c}
+1\\
-1
\end{array}
\right) Y_2 \left( \int_0^t \alpha_b \left( \bm{x}(s), s \right) \ud s \right) .
\end{equation*}
At absolute time $t$, this stochastic equation has  two \textit{internal time frames}: $T_j=\int_0^t \alpha_j(\bm{x}(s),s)ds$,  $j=1,2$.
Here, $Y_1$ and $Y_2$ are independent, unit-rate Poisson processes but dependencies arise through the rates in these internal time-frames.
  Thus Kurtz and Magnus offer  two different representations of the same solution, when rates  are time-varying.
Although much work has appeared on each representation separately, there has been almost no work exploring connections.
Such connections would perhaps  allow probabilistic interpretations of the Magnus expansion.

More generally \textit{time-varying rates} are one way to model \textit{extrinsic noise}, so methods  that can accommodate time-varying rates, such as Magnus expansions described here, may find wider applications \cite{HellanderLotstedtMacNamaraSpatialExtrinsic15, HilfingerPaulsson2011}.
Exploring the robustness of master equations to perturbations, including time-varying perturbations, might bring together methods from Magnus-like approaches, pseudospectral studies, and perhaps even stochastic operator approaches \cite{ActaNumericaRandomMatrixTheory2005}.

       Kurtz's representation has also inspired multi-level Monte Carlo (MLMC) methods to be adapted from the setting of SDEs to the setting of master equations, and in turn this has led to MLMC methods for estimating the \textit{sensitivity} \cite{AndersonSensitivity2012}.
  It will be interesting to see if \textit{adjoint methods} for sensitivity estimates in the setting of continuous SDEs such as the methods for which Giles and Glasserman won \textit{Risk `Quant-of-the-Year'} \cite{GilGla06} are likewise adaptable to the discrete setting of master equations \cite{KormannMacNamara2016}.

  \subsection{Preserving positivity}
  Moler and Van Loan discuss more than nineteen dubious ways for computing the \textit{matrix exponential} \cite{Moler03}.
  When such methods are applied to the important class of graph Laplacian matrices --- as arise in all master equations and Markov processes, and for which the matrix exponential is provably nonnegative and indeed a stochastic matrix --- a fundamental question is: do these numerical methods preserve nonnegativity?
  For example, does    \texttt{MATLAB}'s \texttt{expm} function preserve positivity when applied to a graph Laplacian matrix?
   This question seems especially ripe for research in relation to Krylov-like approximations, Pad\'e-like approximations with scaling and squaring, and recent methods of Al-Mohy and Higham (which are currently the basis of \texttt{expm}  in \texttt{MATLAB})  \cite{HighamExponential09,MohHig11}.
   
     We found the complete Magnus expansion for our isomerisation model.
      Being the full and exact Magnus expansion, it respects the original properties of the system, such as maintaining positivity. 
     Numerical methods in other contexts are often derived by truncation of the Magnus expansion, to a certain prescribed order. 
     In general,  \textit{truncation} of the Magnus expansion does not result in the same properties as a graph Laplacian, so positivity is no longer guaranteed.
     (Although if we are willing to settle for second-order accuracy, then it is possible to truncate so as to maintain these desirable properties.)
     The issue is that the commutator of two graph Laplacians is not in general a graph Laplacian; it may have negative off-diagonal entries.
     This observation is motivating ongoing research whose roots are in geometric numerical integration --- a subject usually concerned with maintaining equalities --- to allow the preservation of \textit{inequalities}, such as preserving positivity.

  More generally it has been known for a long time in the context of ODEs that standard numerical methods such as Runge--Kutta methods, usually do not preserve positivity unless they are of  first order accuracy \cite{bolley78cpd}.
  This also presents a contemporary challenge for Monte Carlo simulation of the sample paths of  master equations:  the widely used \textit{tau-leap methods} and other analogues of the Euler method or of the Euler--Maruyama method, cannot be guaranteed to preserve positivity.
  This challenge is motivating much current research appearing on approximations that are able to maintain positivity in these settings, as exemplified in the Kolmogorov Lecture at the most recent World Congress In Probability and Statistics \cite{WilliamsKolmogorovLecture2016}.

\section{Conclusions}
\label{sec:conclusions}

Pafnuty Chebyshev was an academic parent of Markov and today the world has come full circle with Chebyshev polynomials being a useful basis for numerical solvers of Markovian master equations in the quantum world \cite{QuantumChemistrySoftware}.
Here the adjective `master' is not used in the sense of an overlord; rather it is in the sense of an ensemble averaging principle that emerges at larger scales from the collective behaviour of the mob of microscopic particles, each following their own random walk.
Edelman and Kostlan  take such a walk on ``the road from Kac's matrix to Kac's polynomials,'' and our own matrix examples $ A^{[0]}$ and $  A^{[1]}$ of  \eqref{eq:A0:A1:definitions} also lie at the end of that road, being almost the ``\textit{Kac matrix}'' (as named by Olga Taussky and John Todd)  and ``anti-Kac matrix''  \cite{EdelmanRoadfromKactoKac1994}.
These matrices have served us well as wonderful running examples to illustrate new directions in master equation research. 
Kac did not foresee our applications to isomerisation, nor the way those isomerisation master equations are so naturally amenable to Magnus expansions.
Similarly, these and other applications that we have surveyed, such as the inchoate subject of the pseudospectra of master equations, no doubt have a bright future that we have yet to fully imagine.

\section*{Acknowledgments}
This research and Shev MacNamara have been partially supported by a David G. Crighton Fellowship to DAMTP, Cambridge.
Arieh Iserles presented some of the results of this article in a workshop at the Oberwolfach (\url{https://na.math.kit.edu/marlis/research/meetings/16-oberwolfach/Iserles.pdf}) and thanks participants for interest in the geometry of master equations.
He also acknowledges a fruitful discussion with Nick Trefethen on the pseudospectrum of $A^{[1]}$.

\bibliographystyle{siamplain}
\bibliography{refs}
\end{document}